\documentclass{amsart}
\usepackage[foot]{amsaddr}
\usepackage{mathtools}
\usepackage{accents}

\usepackage{scalerel}
\usepackage{tikz}
\usetikzlibrary{svg.path}

\definecolor{orcidlogocol}{HTML}{A6CE39}
\tikzset{
  orcidlogo/.pic={
    \fill[orcidlogocol] svg{M256,128c0,70.7-57.3,128-128,128C57.3,256,0,198.7,0,128C0,57.3,57.3,0,128,0C198.7,0,256,57.3,256,128z};
    \fill[white] svg{M86.3,186.2H70.9V79.1h15.4v48.4V186.2z}
                 svg{M108.9,79.1h41.6c39.6,0,57,28.3,57,53.6c0,27.5-21.5,53.6-56.8,53.6h-41.8V79.1z M124.3,172.4h24.5c34.9,0,42.9-26.5,42.9-39.7c0-21.5-13.7-39.7-43.7-39.7h-23.7V172.4z}
                 svg{M88.7,56.8c0,5.5-4.5,10.1-10.1,10.1c-5.6,0-10.1-4.6-10.1-10.1c0-5.6,4.5-10.1,10.1-10.1C84.2,46.7,88.7,51.3,88.7,56.8z};
  }
}

\newcommand\orcidicon[1]{\href{https://orcid.org/#1}{\mbox{\scalerel*{
\begin{tikzpicture}[yscale=-1,transform shape]
\pic{orcidlogo};
\end{tikzpicture}
}{|}}}}

\usepackage{hyperref}
\hypersetup{
    colorlinks=true,
    linkcolor={red!50!black},
    citecolor={blue!50!black},
    urlcolor={blue!20!black}
}
\usepackage{thmtools}
\usepackage[noabbrev,capitalize]{cleveref}

\declaretheorem[name=Definition,style=definition,numberwithin=section,qed={\hfill $\triangle$}]{definition}

\declaretheorem[name=Remark,style=remark,numberwithin=section,qed={\hfill $\triangle$}]{remark}


\crefname{observation}{observation}{observations}
\Crefname{observation}{Observation}{Observations}

\crefname{conjecture}{conjecture}{conjectures}
\Crefname{conjecture}{Conjecture}{Conjectures}

\crefname{assumption}{assumption}{assumptions}
\Crefname{assumption}{Assumption}{Assumptions}



\begin{document}

\def\im{\mathrm{Im}\, }
\def\re{\mathrm{Re}\, }
\def\diam{\mathrm{diam}\, }
\def\mod{\mathrm{mod}\, }
\def\Hess{\mathrm{Hess}\, }
\def\rank{\mathrm{rank }\,  }
\def\rg{\mathrm{rg }\,  }
\def\d{\mathrm{d}}
\def\p{\partial }
\def\pr{\mathrm{pr}}
\def\D{\mathrm{D}}
\def\id{\mathrm{id}}
\def\Id{\mathrm{Id}}
\def\Jet{\mathrm{Jet}}
\def\e{\epsilon}
\def\trk{\mathrm{trk}}
\def\C{\mathbb{C}}
\def\Z{\mathbb{Z}}
\def\Q{\mathbb{Q}}
\def\N{\mathbb{N}}
\def\R{\mathbb{R}}
\def\K{\mathbb{K}}
\def\T{\mathbb{T}}
\def\diag{\mathrm{diag}\, }

\newtheorem{theorem}{Theorem}[section]
\newtheorem{corollary}[theorem]{Corollary}
\newtheorem{lemma}[theorem]{Lemma}
\newtheorem{proposition}[theorem]{Proposition}
\newtheorem{prop}[theorem]{Proposition}
\newtheorem{conjecture}[theorem]{Conjecture}
\newtheorem{assumption}{Assumption}[section]

\numberwithin{equation}{section}


\makeatletter
\renewcommand*\env@matrix[1][\arraystretch]{%
  \edef\arraystretch{#1}%
  \hskip -\arraycolsep
  \let\@ifnextchar\new@ifnextchar
  \array{*\c@MaxMatrixCols c}}
\makeatother

\newsavebox{\smlmata}
\savebox{\smlmata}{${\scriptstyle\overline q(0)=\left(\protect\begin{smallmatrix}0.2\\ 0.1\protect\end{smallmatrix}\right) = \overline q(5)}$}
\newsavebox{\smlmatb}
\savebox{\smlmatb}{${\scriptstyle q =\left(\protect\begin{smallmatrix} -1&2\\3&1\protect\end{smallmatrix}\right) \overline q}$}
\newsavebox{\smlmatc}
\savebox{\smlmatc}{${\scriptstyle p(0)=\left(\protect\begin{smallmatrix}0.85\\ 1.5\protect\end{smallmatrix}\right) = p(2 \pi/3)}$}

\newcommand{\dd}{\mathrm{d}}
\DeclareRobustCommand\marksymbol[2]{\tikz[#2,scale=1.2]\pgfuseplotmark{#1};}
\DeclareRobustCommand{\CollLine}{\raisebox{2pt}{\tikz{\draw[-.,red,dash pattern={on 7pt off 2pt on 3pt off 2pt},line width = 1.5pt](0,0) -- (9mm,0);}}}
\DeclareRobustCommand{\RefLine}{\raisebox{2pt}{\tikz{\draw[-,black,dash pattern={on 7pt off 2pt},line width = 1.5pt](0,0) -- (9mm,0);}}}
\DeclareRobustCommand{\ConvLine}{\raisebox{2pt}{\tikz{\draw[-,blue,line width = 1.5pt](0,0) -- (9mm,0);}}}
\DeclareRobustCommand{\redtriangle}{\raisebox{0.5pt}{\tikz{\node[draw,scale=0.3,regular polygon, regular polygon sides=3,fill=red!10!white,rotate=0](){};}}}

\title[Blended backward error analysis]{Backward error analysis for conjugate symplectic methods}

\author[R~McLachlan]{Robert McLachlan}
\address[R~McLachlan]{Massey University, New Zealand}
\author[C~Offen]{Christian Offen$^{\ast}$ {\protect \orcidicon{0000-0002-5940-8057}}}
\address[C~Offen]{University of Paderborn, Germany}
\address{$^{\ast}$Corresponding author}
\email[corresponding author]{christian.offen@uni-paderborn.de}
\date{\today}
\keywords{variational integrators, backward error analysis, Euler--Lagrange equations, multistep methods, conjugate symplectic methods}

\begin{abstract}
The numerical solution of an ordinary differential equation can be interpreted as the exact solution of a nearby modified equation. Investigating the behaviour of numerical solutions by analysing the modified equation is known as backward error analysis.
If the original and modified equation share structural properties, then the exact and approximate solution share geometric features such as the existence of conserved quantities.
Conjugate symplectic methods preserve a modified symplectic form and a modified Hamiltonian when applied to a Hamiltonian system. 
We show how a blended version of variational and symplectic techniques can be used to compute modified symplectic and Hamiltonian structures. In contrast to other approaches, our backward error analysis method does not rely on an ansatz but computes the structures systematically, provided that a variational formulation of the method is known. 
The technique is illustrated on the example of symmetric linear multistep methods with matrix coefficients.
\end{abstract}

\maketitle

\section{Introduction}
\label{sec:introduction}

While the forward error of a numerical method compares the exact solution of an ODE with the numerical solution after one time-step $h$, to obtain qualitative statements about the long-term behaviour of numerical solutions to ODEs, it is helpful to consider a {\em modified ODE} whose exact solution closely approximates the numerical flow map at grid points. 
A modified equation can be obtained as an expansion of the numerical solution as a power series in the step-size $h$. Though the series does not converge in general, optimal truncation techniques have been established such that the flow of the modified system and the numerical method coincide at grid points up to exponentially small error terms. Computing and analysing the structural properties of modified equations is known as {\em backward error analysis (BEA)} (see, for instance, \cite[\S IX]{GeomIntegration} or \cite[\S 5]{Leimkuhler2005}). Next to the analysis of long-term behaviour of numerical schemes, backward error analysis has been used to improve the initialisation of multi-step methods \cite{ellison2014initializing} as well as to improve physics informed machine learning techniques \cite{symplecticShadowIntegrators,SymplecticShadowIntPoster,LagrangianShadowIntegrators}.

If a Hamiltonian ODE is discretised by a symplectic integrator, then any truncation of the modified equation is itself a Hamiltonian system with respect to the original symplectic structure and a modified Hamiltonian. These are also called {\em Shadow Hamiltonians}.
The existence of a modified Hamiltonian or a modified Lagrangian is a key ingredient to obtain statements about long-term behaviour of symplectic method, such as oscillatory energy errors over exponentially long time intervals. Moreover, just as in the exact system, symplectic symmetries of the modified system yield conserved quantities for the modified dynamics by Noether's theorem. This explains why symplectic integrators behave well on completely integrable systems. A detailed discussion can be found in \cite{GeomIntegration}. Vermeeren observed that backward error analysis for variational integrators can be done entirely on the Lagrangian side \cite{Vermeeren2017}.

In contrast to symplectic methods, 
conjugate symplectic methods preserve a modified symplectic structure rather than the original symplectic structure. Conjugate symplectic methods share the excellent long-term behaviour of symplectic methods. Moreover, Noether's theorem applies such that symmetries of the modified system yield modified conserved quantities of the modified dynamics. When modified structures are explicitly known, explicit expressions of modified conserved quantities can be derived. This motivates the development of techniques to compute modified symplectic structures and Hamiltonians.

While traditional methods for the computation of modified Hamiltonians use an Ansatz (i.e.\ an educated guess) of the Hamiltonian as a power series and match terms, working with an Ansatz is challenging when a modified Hamiltonian {\em and} a modified symplectic structure need to be computed simultaneously: the components of matrices representing symplectic structures are in this context not constant but depend on the state space variables, fulfil a symmetry condition, and satisfy the Jacobi identity, which is a partial differential equation \cite[\S VII.2]{GeomIntegration}. This makes finding a suitable Ansatz difficult.

A typical strategy \cite{GeomIntegration,MarsdenWestVariationalIntegrators} to obtain a structure preserving numerical method is to approximate the variational principle
\begin{equation}\label{eq:VarPrinc}
\delta S =0, \quad 
S(y) = \int_{t_0}^{t_N} L(y(t),\dot{y}(t)) \d t, \quad y(t_0) = y_0, y(t_N)=y_N
\end{equation}
which governs the exact Euler--Lagrange equations
\[
\frac{\d }{\d t} \frac{\p L}{\p \dot{y}} -  \frac{\p L}{\p y}  =0
\]
by a discrete variational principle
\[
\nabla S_\Delta (\{y_i\}_i) =0, \quad
S_\Delta(\{y_i\}_i) = \sum_{i=0}^{N-1} h L_\Delta(y_i,y_{i+1}).
\]
Since $y_0$ and $y_N$ are fixed in the variations considered in \eqref{eq:VarPrinc}, the gradient above is taken with respect to all inner grid points $y_1,\ldots,y_{N-1}$. We obtain
the discrete Euler--Lagrange equations
\[
\D_1 L_\Delta(y_i,y_{i+1}) + \D_2 L_\Delta(y_{i-1},y_{i}) =0, \; i=0,\ldots,N-1
\]
which yield approximations $y_i \approx y(t_0+ih)$ to an exact solution $y$. The term recursion is called a {\em variational method}.
Indeed, the class of variational methods is equivalent to the class of symplectic integrators \cite{GeomIntegration,MarsdenWestVariationalIntegrators}.

While backward error analysis for discrete Lagrangians $L_\Delta(y_i,y_{i+1})$ are established, discrete Lagrangians depending on several grid-points $L_\Delta(y_{i},y_{i+1},\ldots,y_{i+s})$ corresponding to multistep methods require different approaches because they are not symplectic but only preserve a modified symplectic structure. In other words, these method are conjugate to symplectic methods. However, the modified symplectic structures or conjugacies, respectively, are given by a formal power series that might not be convergent. Although rigorous optimal truncation results are not available, we will demonstrate in numerical examples that truncations can be useful objects in the analysis of the numerical methods.

In the following, we will introduce {\em blended backward error analysis} to systematically compute modified Hamiltonian and modified symplectic structures. We will prove the following theorem which applies, for instance, to series expansions $\mathcal{L}_\Delta$ of consistent discrete Lagrangians $L_\Delta(y(t),y(t+h),\ldots,y(t+sh))$.

\begin{theorem}\label{thm:Main}
Consider a power series $\mathcal{L}_\Delta(y^{[\infty]})$ in a formal variable $h$. The series depends on the jet $y^{[\infty]}=(y,\dot{y},\ddot{y},\ldots)$ of a variable $y$ such that any truncation only depends on a finite jet of $y$. Assume further that the truncation to zeroth order constitutes a regular Lagrangian $L(y,\dot{y})$, i.e.\ $\left(\frac{\p^2 L}{\p \dot{y}^i \p \dot{y}^j}\right)_{i,j}$ is invertible.
\begin{itemize}
	
	\item
	There exists a 2nd order modified equation given as a formal power series in $h$ such that for any $N \in \N$ a solution of the modified equation truncated to order $\mathcal{O}(h^N)$ solves the Euler--Lagrange equations to $\mathcal L^{[N]}_\Delta$ up to an error of order $\mathcal{O}(h^{N+1})$, where $\mathcal L^{[N]}_\Delta$ is the truncation of $\mathcal{L}_\Delta$ to order $\mathcal{O}(h^N)$.
	
	\item
	If we denote $z=(y,\dot{y})$, there exists a symplectic structure matrix $J_\mod$ and a Hamiltonian $H_\mod$ given as formal power series in $h$ such that solutions to Hamilton's equations
	\[
	\dot{z} = J^{[N]}_\mod(z)^{-1} \nabla H^{[N]}_\mod(z)
	\]
	fulfil the modified equation. Here, $J^{[N]}_\mod$ and $H^{[N]}_\mod$ are truncations to order $\mathcal{O}(h^N)$ of $J_\mod$ and $H_\mod$, respectively such that $\mathcal{L}^{[N]}_\Delta$ is regular, i.e.\ $\left(\frac{\p^2 \mathcal{L}^{[N]}_\Delta}{\p {y^{(M)}}^i \p {y^{(M)}}^j}\right)_{i,j}$ is invertible, where $y^{(M)}$ is the highest derivative of $\mathcal{L}_\Delta^{[N]}$.
\end{itemize}
\end{theorem}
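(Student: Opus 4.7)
The plan is to address the two bullets in order, using Ostrogradsky's higher-derivative canonical formalism as the bridge between the Lagrangian and the Hamiltonian picture.

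For the first bullet, I would write the Euler--Lagrange equation of $\mathcal{L}_\Delta^{[N]}$ as
$$E^{[N]}(y,\dot y,\ldots,y^{(2M)}) = \sum_{k=0}^{M}(-1)^k \frac{\d^k}{\d t^k}\frac{\p \mathcal{L}_\Delta^{[N]}}{\p y^{(k)}} = 0,$$
where $M=M(N)$ is the highest derivative appearing in $\mathcal{L}_\Delta^{[N]}$. Since $\mathcal{L}_\Delta^{[N]}=L(y,\dot y)+h\,R_N(y^{[\infty]};h)$ with $R_N$ polynomial in $h$, the leading coefficient in $h$ of $E^{[N]}$ is the classical Euler--Lagrange expression, and its second-order part is $\frac{\p^2 L}{\p\dot y\,\p\dot y}\ddot y$. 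By the stated regularity of $L$ this uniquely determines $\ddot y=F_0(y,\dot y)$. Make the ansatz $\ddot y = F(y,\dot y;h)=\sum_{k\ge 0} h^k F_k(y,\dot y)$ and let $D_h = \dot y\,\p_y + F(y,\dot y;h)\,\p_{\dot y}$ be the total derivative along the modified equation, so that every $y^{(j)}$ with $j\ge 2$ is formally $D_h^{j-2}F$. Substituting into $E^{[N]}$ and collecting the coefficient of $h^k$ produces an equation of the form
$$\frac{\p^2 L}{\p\dot y\,\p\dot y}F_k + G_k(y,\dot y;F_0,\ldots,F_{k-1}) = 0,$$
which, again by regularity of $L$, determines $F_k$ uniquely. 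Truncating at order $h^N$ yields a second-order equation whose formal solution satisfies $E^{[N]} = \mathcal{O}(h^{N+1})$ by construction.

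For the second bullet, I would apply the Ostrogradsky canonical formalism to $\mathcal{L}_\Delta^{[N]}$; the assumed invertibility of the Hessian in the highest derivative is exactly the regularity hypothesis that makes this formalism applicable. This yields, on a $2M$-dimensional phase space, canonical coordinates $Q_k=y^{(k-1)}$ and
$$P_k = \sum_{j=0}^{M-k}(-1)^j \frac{\d^j}{\d t^j}\frac{\p \mathcal{L}_\Delta^{[N]}}{\p y^{(k+j)}}, \qquad k=1,\ldots,M,$$
together with the canonical form $\omega_{\mathrm{Ost}} = \sum_{k=1}^M \d P_k\wedge \d Q_k$ and a Hamiltonian $H_{\mathrm{Ost}}$ whose equations of motion are equivalent to $E^{[N]}=0$. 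Using the modified equation from the first bullet, every higher derivative $y^{(j)}$ becomes a formal power series in $h$ on $z=(y,\dot y)$, so the expressions above define a map $\Phi_h\colon z\mapsto(Q_1,\ldots,Q_M,P_1,\ldots,P_M)$. At $h=0$ one has $M=1$ and $\Phi_0(y,\dot y)=(y,\p L/\p\dot y)$ is the classical Legendre transform, which is an embedding by regularity of $L$, hence $\Phi_h$ is an embedding as a formal power series.

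I would then set
$$\omega_{\mod} = \Phi_h^* \omega_{\mathrm{Ost}}, \qquad H_{\mod} = \Phi_h^* H_{\mathrm{Ost}},$$
and read off $J_{\mod}$ from $\omega_{\mod} = \tfrac{1}{2}(J_{\mod}^{-1})_{ij}\,\d z^i\wedge \d z^j$. Closedness of $\omega_{\mod}$ is automatic; nondegeneracy follows because at $h=0$ the pull-back recovers the standard symplectic form on $(y,\dot y)$ induced by the Legendre transform of $L$, so as a formal power series $\omega_{\mod}$ is nondegenerate. To check that Hamilton's equations for $(J_{\mod},H_{\mod})$ reproduce the modified equation, I would verify that the modified vector field $X_{\mod}=\dot y\,\p_y+F\,\p_{\dot y}$ pushed forward by $\Phi_h$ agrees on the image of $\Phi_h$ with the Ostrogradsky Hamiltonian vector field; this is where Ostrogradsky's equivalence with $E^{[N]}=0$ enters, combined with the fact that the modified equation annihilates $E^{[N]}$ up to $\mathcal{O}(h^{N+1})$. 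Naturality of the Hamilton construction under pull-back along an embedding then gives the claim order by order in $h$.

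The main obstacle is this last step: reconciling the Ostrogradsky Hamiltonian vector field with the pushforward of $X_{\mod}$ when the modified equation solves $E^{[N]}=0$ only up to $\mathcal{O}(h^{N+1})$. One must argue carefully that the mismatch lives in the same formal order, so that truncating $J_{\mod}$ and $H_{\mod}$ at $h^N$ yields a Hamiltonian system whose flow coincides with the truncated modified equation to the required order. A secondary bookkeeping issue is that $M(N)$ grows with $N$; however, since we always pull back to the fixed plane $z=(y,\dot y)$, this growth only affects truncation orders and not the formal construction itself.
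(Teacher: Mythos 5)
Your proposal follows essentially the same route as the paper: construct the second-order modified equation recursively from the Euler--Lagrange equations of $\mathcal{L}_\Delta^{[N]}$ using regularity of $L$, pass to Ostrogradsky's canonical formalism on the higher jet space, and pull the resulting Hamiltonian structure back along the substitution map to the $(y,\dot{y})$-plane. The step you flag as the main obstacle --- reconciling the pushforward of the modified vector field with the Ostrogradsky Hamiltonian vector field when the modified equation solves the Euler--Lagrange equations only up to $\mathcal{O}(h^{N+1})$ --- is precisely what the paper isolates as \cref{prop:PullbackHam}: the truncated difference of the two vector fields lies both in the formal tangent space of the image and in its symplectic complement, hence vanishes because the image is a symplectic submanifold (nondegeneracy at $h=0$ coming from the Legendre transform of $L$, as you note).
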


The technique will be illustrated for linear multistep methods with matrix valued coefficients (\cref{sec:MSApplication}). These occur, for instance, when in a system of coupled ODEs components of the differential equation are discretised separately with traditional linear multistep method, when multistep methods are stabilised \cite{Jodar1991,Lambert1972,Sigurdsson1971}, or when discretisation schemes of PDEs are analysed \cite{BEASymPDE}. 

Moreover, we will analyse under which conditions modified Lagrangians exist: if the original equation is the Euler--Lagrange equation to a variational principle of the form \eqref{eq:VarPrinc} for a Lagrangian $L(y,\dot{y})$, it is natural to ask, whether there exists a modified Lagrangian $L_\mod(y,\dot{y})$ such that the modified equation is governed by the Euler--Lagrange equations to $L_\mod$. In contrast to classical variational integrators, for which $L_\mod(y,\dot{y})$ is known to exist and can be computed \cite{Vermeeren2017}, for conjugate symplectic schemes $L_\mod$ may only exists in modified variables $L_\mod(\tilde{y},\dot{\tilde{y}})$. We will prove that $L_\mod$ exists in the original variables $(y,\dot{y})$ if $J_\mod$ has the form
\[
J_\mod = \begin{pmatrix}
	\ast & \ast\\
	\ast & 0
\end{pmatrix}.
\]
In particular, this shows that for classical consistent, symmetric, linear multistep methods with matrix coefficients with a central force evaluation applied to second order equations $L_\mod$ exists in the original variables $(y,\dot{y})$ if all coefficients are multiples of the identity matrix, i.e.\ we have a multistep method with scalar coefficients. However, in the general case of matrix coefficients $L_\mod$ only exists in modified variables $(\tilde{y},\dot{\tilde{y}})$.

The article is structured as follows: \Cref{sec:MSApplication} illustrates the ideas of blended backward error analysis on linear multi-step methods. \Cref{sec:ModComp} shows how to compute the modified data $J_\mod$ and $H_\mod$ introduced in \cref{thm:Main}. The technique is then applied to linear multi-step methods with matrix valued coefficients in \cref{sec:CompEx} and results are illustrated by numerical experiments.
Additionally, for comparison of blended backward error analysis with classical backward error analysis, \cref{sec:IlluBEA} contains an application of blended backward error analysis to a mechanical ordinary differential equation discretised with the Störmer-Verlet scheme.
A formal proof of \cref{thm:Main} is provided in \cref{sec:proofs}. Finally, \cref{sec:ExistenceLmod} discusses the existence of modified Lagrangians as formal power series and future research directions are suggested in \cref{sec:future}.

\section{Application of blended backward error analysis to linear multistep methods with matrix coefficients}\label{sec:MSApplication}

To illustrate the idea of blended backward error analysis, we compute modified symplectic structures and Hamiltonians of linear multistep methods. 

Consistent, symmetric linear multistep methods with a single force evaluation applied to the second order ODE
\begin{equation}
\label{eq:ODE}
\ddot{y} = \nabla U(y(t))
\end{equation} take the form
\begin{equation}
\label{eq:MS}
\sum_{j=1}^{\frac{s}{2}} A_j(y(t-jh)-2y(t)+y(t+jh)) = h^2 \nabla U(y(t))
\end{equation}
with \begin{equation}
\label{eq:consistency}
\sum_{j=1}^{\frac{s}{2}} j^2 A_j = I.
\end{equation}
Relation \eqref{eq:consistency} is coming from the consistency requirement \cite{Jodar1991}. These are $s$-step methods, where $s$ is even.
Here we allow matrix valued coefficients $A_j$ \cite{Jodar1991,Lambert1972,BEASymPDE,Sigurdsson1971}, $h$ is the step-size and $I$ denotes the identity matrix.
If the coefficients $A_j$ are scalars, then the schemes constitute classical consistent, symmetric linear multistep methods.
A series expansion of \eqref{eq:MS} in $h$ is equivalent to a power series expression of the form
\begin{equation}
\label{eq:SeriesMS}
\ddot{y} = \nabla U(y(t)) + \sum_{i=1}^N h^i \tilde{g}_i(y(t),\ldots,y^{(a_i)}) + \mathcal{O}(h^{N+1})
\end{equation}
for some $h$-independent expressions $\tilde g_i$ in the $a_i$-jet of $y$ at $t$. Substituting 3rd and higher derivatives on the right hand side of \eqref{eq:SeriesMS} with derivatives of \eqref{eq:SeriesMS} itself iteratively yields an equation of the form
\begin{equation}
\label{eq:modODE}
\ddot{y} = \nabla U(y(t)) + \sum_{i=1}^N h^i g_i(y(t),\dot{y}(t)) + \mathcal{O}(h^{N+1})
\end{equation}
which is called the {\em modified equation} of method \eqref{eq:MS} applied to \eqref{eq:ODE}.
We refer to \cite{GeomIntegration} for optimal truncation techniques and a discussion of spurious solutions not covered by the considered modified system for the case of linear multistep methods with scalar coefficients.
In the following, we will focus on the question which structural properties the modified equation \eqref{eq:modODE} shares with the original ODE \eqref{eq:ODE}.

\subsection*{Variational Structure}
The ODE \eqref{eq:ODE} has first order variational structure as it is the Euler--Lagrange equation
\[
\frac{\d}{\d t} \frac{\p L}{\p \dot{y}} -\frac{\p L}{\p y} =0
\]
to the variational principle
\[
\delta S =0
\quad
\text{for}
\quad
S(y) = \int L(y(t),\dot{y}(t))\d t
\]
with
\[
L(y,\dot{y}) = \frac{1}{2} \|\dot{y}\|^2 + U(y).
\]
Moreover, there exists a variational principle for \eqref{eq:MS}:

\begin{lemma}
Let the matrices $A_j \in \R^{n \times n}$ be symmetric. For $T>0$ let $\mathbb{T}$ either be the circle $\mathbb{T}=\R / T \Z$ or the real line $\mathbb{T}=\R$.
For $y$ defined on $\mathbb{T}$ the variational principle
\begin{equation}
\label{eq:SDelta}
\delta S_\Delta =0
\quad
\text{for}
\quad
S_\Delta(y) = \int_{\mathbb{T}} L_{\Delta}\left(y(t),y(t+h),\ldots,y\left(t+\frac{s}{2}h\right)\right)\d t
\end{equation}
with
\begin{align*}
L_{\Delta}\left(y(t),y(t+h),\ldots,y(t+\frac{s}{2}h)\right)\qquad \qquad \qquad \qquad \qquad \qquad \qquad \qquad\\ 
= \frac{1}{2h^2}\left( \sum_{j=1}^{\frac{s}{2}}  \langle A_j (y(t+jh)-y(t)),y(t+jh)-y(t)\rangle \right) + U(y(t)).
\end{align*}
implies the functional equation \eqref{eq:MS}.
Moreover, if $\mathbb{T}=[a,b]$ is an interval, then \eqref{eq:SDelta} implies \eqref{eq:MS} on the interval $\accentset{\circ}{\mathbb{T}}=  [a+\frac{s}{2}h,b-\frac{s}{2}h]$.
Here we assume that the function space for $y$ and the potential $U$ are such that $U \circ y$ and $\nabla U \circ y$ constitute square integrable functions.
\end{lemma}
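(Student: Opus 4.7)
The plan is a direct computation of the first variation, exploiting the translation invariance of the domain of integration in order to move shifts off $\delta y$, followed by the fundamental lemma of the calculus of variations.

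First I would compute, by the chain rule,
\[
\delta S_\Delta = \int_{\mathbb{T}} \sum_{k=0}^{s/2} \D_{k+1} L_\Delta\bigl(y(t),y(t+h),\ldots,y(t+\tfrac{s}{2}h)\bigr)\cdot \delta y(t+kh)\, \d t,
\]
where $\D_{k+1}$ denotes the partial with respect to the $(k{+}1)$-th slot. In each summand I would substitute $\tau = t + kh$. On the circle $\mathbb{T}=\R/T\Z$ this change of variables is unproblematic; on the line $\mathbb{T}=\R$ variations are taken compactly supported, so the shifted integrands still make sense and no boundary terms appear; on an interval $\mathbb{T}=[a,b]$ the support of $\delta y$ is restricted to the interior $[a+\tfrac{s}{2}h,b-\tfrac{s}{2}h]$, which justifies the substitution for every $k\in\{0,\ldots,s/2\}$ simultaneously. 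After relabeling, the variation collects as
\[
\delta S_\Delta = \int_{\mathbb T}\Bigl(\sum_{k=0}^{s/2} \D_{k+1} L_\Delta\bigl(y(\tau-kh),\ldots,y(\tau+(\tfrac{s}{2}-k)h)\bigr)\Bigr)\cdot\delta y(\tau)\, \d\tau.
\]

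Next I would compute the partials of $L_\Delta$ explicitly. For $j\geq 1$, differentiating $\tfrac{1}{2h^2}\langle A_j(y_j-y_0),y_j-y_0\rangle$ in the $j$-th slot and using $A_j=A_j^\top$ gives $\tfrac{1}{h^2}A_j(y_j-y_0)$; differentiation in the zeroth slot, together with the $U(y_0)$ contribution, yields $-\tfrac{1}{h^2}\sum_{j=1}^{s/2} A_j(y_j-y_0)+\nabla U(y_0)$. Here the symmetry hypothesis on $A_j$ is essential, since without it the gradient would involve $A_j+A_j^\top$ and the resulting scheme would have coefficient $\tfrac{1}{2}(A_j+A_j^\top)$ rather than $A_j$. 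Substituting these partials, evaluated at the shifted arguments corresponding to each value of $k$, and multiplying through by $-h^2$, the bracket collapses to
\[
\sum_{j=1}^{s/2} A_j\bigl(y(\tau+jh)-2y(\tau)+y(\tau-jh)\bigr) - h^2\nabla U(y(\tau)).
\]

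Finally, since $\delta y$ is arbitrary (compactly supported in the interior in the interval case; arbitrary smooth in the circle and line cases), the fundamental lemma of the calculus of variations forces the integrand in the $\delta y(\tau)$-pairing to vanish pointwise, yielding exactly the functional equation \eqref{eq:MS}. The square-integrability hypothesis on $U\circ y$ and $\nabla U\circ y$ is only needed to ensure that $S_\Delta$ and its first variation are well-defined as Lebesgue integrals, so that the manipulations above are justified. I expect no real obstacle: the only item that requires care is bookkeeping the simultaneous change of variables and checking that it is legitimate in each of the three cases for $\mathbb{T}$, which is where the restriction to the interior $\accentset{\circ}{\mathbb{T}}$ appears in the interval case.
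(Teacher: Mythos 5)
Your proof is correct and follows essentially the same route as the paper's: the paper merely packages your change of variables $\tau=t+kh$ as the $L^2$-adjoint $\Delta_{jh}^{\ast}=\Delta_{-jh}$ of the forward difference operator, so that $\Delta_{jh}^{\ast}\Delta_{jh}$ produces the central difference directly, and then applies the fundamental lemma exactly as you do, with the same restriction of $\delta y$ to $\mathcal{C}_c^\infty(\accentset{\circ}{\mathbb{T}},\R^n)$ in the interval case. Your explicit note on where the symmetry of $A_j$ enters (avoiding the coefficient $\tfrac12(A_j+A_j^\top)$) is accurate and consistent with the paper's hypotheses.
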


\begin{proof}
Let $\Delta_{\tau} y$ denote the forward difference, i.e.\ $(\Delta_{\tau} y)(t) = y(t+\tau)-y(t)$. Then $\langle y,\Delta_{\tau} z \rangle_{L^2(\mathbb{T},\R^n)} = \langle \Delta_{\tau}^\ast y, z \rangle_{L^2(\mathbb{T},\R^n)}$ holds with $\Delta_{\tau}^\ast = \Delta_{-\tau}$ on $\mathbb{T}$ if $\mathbb{T} \in \{\R, \R / T\Z\}$ or on the interval $[a+\tau,b-\tau]$ if $\mathbb{T}=[a,b]$.
The expression $\Delta_{\tau}^\ast \Delta_{\tau} y$ corresponds to the central difference
\[ (\Delta_{\tau}^\ast \Delta_{\tau} y) (t) = -y(t+\tau) + 2 y(t) - y(t-\tau).\]

Let $\delta$ denote the variational derivative in the direction of a variation $\delta y$, i.e.\
$\delta S_\Delta (y) =  \lim_{\e \to 0} \frac{1}{\e} (S_\Delta(y+\e \delta y)-S_\Delta(y))$.
Let $\delta y \in \mathcal{C}_c^\infty(\mathbb{T},\R^n)$, if $\mathbb{T} \in \{\R, \R / T\Z\}$ and let $\delta y \in \mathcal{C}_c^\infty(\accentset{\circ}{\mathbb{T}},\R^n)$, if $\mathbb{T} =[a,b]$.
We have
\begin{align*}
\delta S_\Delta (y)
&=  \frac 1 {2h^2} \sum_{j=1}^{\frac s2} \delta  \langle A_j \Delta_{jh} y, \Delta_{jh} y  \rangle_{L^2(\mathbb{T},\R^n)}
+ \delta \int_{\mathbb{T}} U(y(t)) \, \d t\\
&=  \frac 1 {h^2} \sum_{j=1}^{\frac s2}   \langle A_j \Delta_{jh} y, \Delta_{jh} \delta y  \rangle_{L^2(\mathbb{T},\R^n)} + \langle \nabla U(y),  \delta y  \rangle_{L^2(\mathbb{T},\R^n)}\\
&=  \frac 1 {h^2} \sum_{j=1}^{\frac s2}   \langle A_j \Delta_{jh}^\ast\Delta_{jh} y + \nabla U(y),  \delta y  \rangle_{L^2(\mathbb{T},\R^n)}.
\end{align*}
	
Now \eqref{eq:MS} follows from the fundamental lemma of the calculus of variations on $\mathbb{T}$ or $\accentset{\circ}{\mathbb{T}}$, respectively.
\end{proof}

To analyse structure preserving properties of the method \eqref{eq:MS}, it might seem natural to seek a modified Lagrangian $L_{\mod}(y,\dot{y})$ given as a formal power series in the step-size $h$ such that the modified variational principle
\[
\delta S_\mod =0
\quad
\text{for}
\quad
S_\mod(y) = \int L_\mod(y(t),\dot{y}(t))\d t
\]
covers smooth solutions of \eqref{eq:SDelta} up to any order in the step-size $h$. However, we show that although a 1st order Lagrangian $L_{\mod}$ covering the modified equations always exists as a power series, it only exist in modified variables $(\tilde y,\dot{\tilde y})$ in the most general case. Even for simple methods, the existence of an expression in closed form for the change of coordinates from $(y,\dot y)$ to $(\tilde y,\dot{\tilde y})$ can not be expected. This makes it difficult to compute $L_{\mod}$ using an ansatz.

\subsection*{Hamiltonian structure} 
Another approach is to work on the Hamiltonian side. The ODE \eqref{eq:ODE} has the form of a Hamiltonian system
\begin{equation}
\label{eq:HamiltonianODE}
\dot z(t) = J^{-1} \nabla H(z(t)), \quad H(z) = \frac{1}{2}\|\dot{y}\|^2 - U(y), \quad z=\begin{pmatrix}y\\ \dot y\end{pmatrix}.
\end{equation}
Here
\begin{equation}
\label{eq:J}
J = \begin{pmatrix}
0&-I\\
I & 0
\end{pmatrix}
\end{equation}
is the standard symplectic structure. 

In this paper we use a blended approach of the variational and Hamiltonian viewpoint to systematically compute a modified Hamiltonian system
\begin{equation}
\label{eq:HamModODE}
\dot z(t) = J^{-1}_\mod(z(t)) \nabla H_\mod(z(t))
\end{equation}
consisting of a modified Hamiltonian $H_\mod$ and a modified symplectic structure $J_\mod$ given as formal power series in $h$ such that for a truncation to arbitrary order $N$ the ODE \eqref{eq:HamModODE} covers the modified equation \eqref{eq:modODE} up to higher order terms.
Here $J_\mod$ is a skew symmetric matrix which satisfies a Jacobi identity. 
The following theorem can be considered as an instance of \cref{thm:Main} and will be proved in \cref{sec:proofs}.
\begin{theorem}\label{thm:HJmod}
Let $N \in \N$ denote the considered order of the series expansion of the matrix multistep method \eqref{eq:MS}, where the coefficients are symmetric matrices.
There exists a modified symplectic structure $J^{[N]}_\mod$, which is $\mathcal{O}(h)$ close to $J$ and a modified Hamiltonian $H^{[N]}_\mod$, which is $\mathcal{O}(h)$ close to $H$, such that 
\begin{equation}
\label{eq:HamModODE2}
\dot z(t) = (J^{[N]}_\mod(z))^{-1} \nabla H^{[N]}_\mod(z(t))
\end{equation}
with coordinate $z = (y,\dot y)$ is equivalent to the modified equation \eqref{eq:modODE} up to terms of order $\mathcal{O}(h^{N+1})$.
\end{theorem}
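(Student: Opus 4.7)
The plan is to deduce Theorem \ref{thm:HJmod} as a specialisation of Theorem \ref{thm:Main}. To fit the hypotheses, I would expand each shifted argument $y(t+jh)$ of the discrete Lagrangian in \eqref{eq:SDelta} into a Taylor series in $h$, obtaining a formal power series $\mathcal{L}_\Delta(y^{[\infty]})$ whose every truncation depends on a finite jet of $y$. A direct computation using the consistency relation \eqref{eq:consistency} gives
\[
\mathcal{L}_\Delta^{[0]}(y,\dot y) = \tfrac{1}{2}\Big\langle\Big(\sum_{j=1}^{s/2} j^2 A_j\Big)\dot y,\dot y\Big\rangle + U(y) = \tfrac{1}{2}\|\dot y\|^2 + U(y) = L(y,\dot y),
\]
whose Hessian in $\dot y$ is the identity. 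The zeroth-order regularity hypothesis of Theorem \ref{thm:Main} is therefore satisfied.

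Next, I would match the 2nd-order modified equation produced by Theorem \ref{thm:Main} for $\mathcal{L}_\Delta$ with the modified equation \eqref{eq:modODE} of the multistep scheme. By integration by parts in $\int \mathcal{L}_\Delta\,\d t$, the formal Euler--Lagrange equation of $\mathcal{L}_\Delta$ reproduces term-by-term the Taylor expansion \eqref{eq:SeriesMS} of \eqref{eq:MS}; this is a direct consequence of the variational lemma proved earlier, which already identifies the critical points of $S_\Delta$ with the solutions of \eqref{eq:MS}. The iterative substitution procedure supplied by Theorem \ref{thm:Main} then reduces these formal Euler--Lagrange equations to a 2nd-order equation, and uniqueness of the substitution at each order in $h$ identifies it with \eqref{eq:modODE}. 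The second item of Theorem \ref{thm:Main} now yields $J^{[N]}_{\mod}$ and $H^{[N]}_{\mod}$ in the coordinate $z=(y,\dot y)$ whose Hamilton equations agree with \eqref{eq:modODE} up to $\mathcal{O}(h^{N+1})$. The closeness $J^{[N]}_{\mod} = J+\mathcal{O}(h)$ and $H^{[N]}_{\mod} = H+\mathcal{O}(h)$ follows because the classical Legendre transform applied to $\mathcal{L}_\Delta^{[0]} = L$ returns precisely the canonical pair \eqref{eq:J} and the Hamiltonian of \eqref{eq:HamiltonianODE}, and every correction carries a positive power of $h$.

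The main obstacle is not in the matching above but inside the proof of Theorem \ref{thm:Main}, where $J^{[N]}_{\mod}$ and $H^{[N]}_{\mod}$ must be constructed in the original coordinates $(y,\dot y)$ from the higher-order formal Lagrangian $\mathcal{L}^{[N]}_\Delta$. This requires combining an Ostrogradsky-type momentum assignment with iterative elimination of higher derivatives using the already-known lower-order modified equation, while ensuring skew-symmetry of $J^{[N]}_{\mod}$ and the Jacobi identity at every order---exactly the blended strategy developed in \cref{sec:ModComp} and carried out rigorously in \cref{sec:proofs}. For the present multistep application, once the formal expansion $\mathcal{L}_\Delta$ and its zeroth-order regularity are established, the conclusion of Theorem \ref{thm:HJmod} is an immediate consequence.
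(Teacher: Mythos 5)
Your proposal is correct and follows essentially the same route as the paper: the paper's own proof of \cref{thm:HJmod} consists precisely of observing that the construction of $J^{[N]}_{\mathrm{mod}}$ and $H^{[N]}_{\mathrm{mod}}$ for the multistep Lagrangian is the one verified in the proof of \cref{thm:Main}. Your version is in fact slightly more explicit than the paper's, since you verify the zeroth-order regularity hypothesis $\mathcal{L}_\Delta^{[0]}=L$ via the consistency relation \eqref{eq:consistency} and spell out the matching with \eqref{eq:modODE}, both of which the paper leaves implicit.
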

Here $O(h)$-closeness of $J^{[N]}_\mod$ and $J$ means that the zeroth coefficient of the polynomial $J^{[N]}_\mod$ in the formal variable $h$ is given by $J$. $O(h)$-closeness of $H^{[N]}_\mod$ and $H$ has an analogous meaning.

We observe conditions under which a modified first order Lagrangian $L_{\mod}(y,\dot{y})$ exists in the original variable $y$ by analysing the modified symplectic structure $J_\mod$.

\begin{theorem}\label{thm:Smod}
If the matrices $A_j$ in \eqref{eq:MS} are scalar multiples of the identity matrix, then there exists a modified Lagrangian $L^{[N]}_\mod$ depending on $(y,\dot{y})$ that is $\mathcal{O}(h)$-close to $L(y,\dot{y})$ such that the modified variational principle 
\[
\delta S^{[N]}_\mod =0
\quad
\text{for}
\quad
S^{[N]}_\mod(y) = \int L^{[N]}_\mod(y(t),\dot{y}(t))\d t
\]
yields the modified equation \eqref{eq:modODE} up to terms of order $\mathcal{O}(h^{N+1})$.
\end{theorem}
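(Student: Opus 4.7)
The plan is to combine \cref{thm:HJmod} with the general existence criterion for a first-order modified Lagrangian that will be established in \cref{sec:ExistenceLmod}, namely that $L_\mod(y,\dot y)$ exists in the original variables whenever the modified symplectic structure matrix has the block form
\[
J_\mod = \begin{pmatrix} \ast & \ast \\ \ast & 0 \end{pmatrix}.
\]
Granted this criterion, the proof reduces to verifying that the scalar-coefficient hypothesis forces $J^{[N]}_\mod$ into exactly this shape.

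First I would apply \cref{thm:HJmod} to \eqref{eq:MS} with $A_j = \alpha_j I$, obtaining $J^{[N]}_\mod$ and $H^{[N]}_\mod$ that are $\mathcal O(h)$-close to $J$ and $H$ and satisfy \eqref{eq:HamModODE2}. Under the scalar hypothesis the expanded discrete Lagrangian reads
\[
\mathcal L_\Delta(y,\dot y,\ddot y,\ldots) = \sum_{j=1}^{s/2} \frac{\alpha_j}{2h^2} \Bigl\| \sum_{k\geq 1} \frac{(jh)^k}{k!} y^{(k)} \Bigr\|^2 + U(y),
\]
so the higher-derivative part is a diagonal quadratic form in the jet variables: every cross-coupling between components of $y$ in the kinetic part is proportional to the identity matrix. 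Feeding this input into the Ostrogradsky/Legendre-style construction of \cref{sec:ModComp}, I would argue that the $(\dot y,\dot y)$-block of $J^{[N]}_\mod$ vanishes at every order. The cleanest way to see this is by reduction to the scalar case $n=1$: the construction then produces a non-degenerate skew-symmetric $2\times 2$ matrix whose diagonal entries must be zero; since the discretisation couples the $n$ components of $y$ only through $U$, which enters $J^{[N]}_\mod$ only via the iterated substitutions replacing $y^{(k)}$ ($k\ge 2$) by functions of $(y,\dot y)$, the block form is preserved as we lift back to $n\geq 2$.

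Once the block structure of $J^{[N]}_\mod$ is established, the criterion from \cref{sec:ExistenceLmod} produces a modified Lagrangian $L^{[N]}_\mod(y,\dot y)$ whose Euler--Lagrange equations reproduce \eqref{eq:HamModODE2}, and hence the modified equation \eqref{eq:modODE}, up to $\mathcal O(h^{N+1})$. The required $\mathcal O(h)$-closeness of $L^{[N]}_\mod$ to $L=\frac12\|\dot y\|^2+U(y)$ follows from the $\mathcal O(h)$-closeness of $J^{[N]}_\mod$, $H^{[N]}_\mod$ to $J$, $H$ via the canonical Legendre-type transform that underlies the criterion.

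The main obstacle is exactly the middle step: showing that the scalar hypothesis, while easily read off from $\mathcal L_\Delta$, really survives the nonlinear iterative substitutions in the construction of \cref{sec:ModComp} and emerges in the final $J^{[N]}_\mod$ as the vanishing of the $(\dot y,\dot y)$-block at every order in $h$. The componentwise-decoupling argument sketched above is the conceptual route, but its formal execution requires careful bookkeeping, since the potential $U$ couples the dynamics across components even though the discretisation itself does not.
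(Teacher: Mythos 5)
You have the right architecture: reduce the theorem to the criterion of \cref{prop:HamS}, i.e.\ show that under the scalar hypothesis the lower-right block of $J^{[N]}_\mod$ vanishes, so that the vertical distribution is Lagrangian and a first-order $L^{[N]}_\mod(y,\dot y)$ exists. But the step you yourself flag as the ``main obstacle'' is exactly the step that is missing, and the argument you sketch for it does not close the gap. Your reduction to $n=1$ only controls the \emph{diagonal} entries of the $(\dot y,\dot y)$-block, which vanish trivially by skew-symmetry for \emph{any} method; for $n\ge 2$ the content of the block condition lies entirely in the \emph{off-diagonal} entries. These are not protected by your decoupling observation: although the kinetic part of $\mathcal L_\Delta$ is componentwise diagonal when $A_j=\alpha_j I$, the pull-back to the $1$-jet space replaces $y^{(k)}$, $k\ge 2$, by functions of $(y,\dot y)$ built from the modified equation, and these substitutions inject mixed partial derivatives of $U$ into the momenta $p_i^j$, producing a priori nonzero $\d\dot y_{c}\wedge\d\dot y_{c'}$ terms with $c\neq c'$. (Compare the explicit coefficients $b_1$, $b_2$ in \cref{sec:CompEx}: the cross-component derivatives $U^{(1,1)},U^{(2,1)},\dots$ enter precisely through these substitutions, and whether they cancel in the lower-right block is the whole question.) So the assertion that ``the block form is preserved as we lift back to $n\ge 2$'' is stated, not proved.

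The paper closes this gap by a different ingredient that your proposal does not contain: for classical symmetric linear multistep methods with scalar coefficients, the underlying one-step method is conjugate to a symplectic map via a conjugacy $\psi$ given as a $P$-series in the original vector field \cite{Chartier2006}; the $P$-series structure of $\psi$ forces the vertical distribution to be Lagrangian for $\omega_\mod=\psi^\ast\omega$, after which \cref{prop:HamS} applies verbatim. If you want to avoid invoking that theory, you must actually establish, by explicit computation or a structural induction over the orders of $h$, that the off-diagonal entries of the $(\dot y,\dot y)$-block of $J^{[N]}_\mod$ cancel under the scalar hypothesis --- the ``careful bookkeeping'' you defer is not a detail but the substance of the proof.
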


Again, $O(h)$-closeness is to be interpreted in a formal sense, analogously to its meaning in \cref{thm:HJmod}.

\Cref{thm:Smod} applies to traditional multistep methods with scalar coefficients. However, we will see that for general linear multistep methods with matrix-valued coefficients the existence of a first order modified Lagrangian in the original variable $y$ cannot be expected. A proof of \cref{thm:Smod} is postponed to \cref{sec:ExistenceLmod}.

\section{Computation of modified Hamiltonian structure}\label{sec:ModComp}

In this section we introduce a method to compute the modified data $J^{[N]}_\mod$ and $H^{[N]}_\mod$ of \cref{thm:HJmod} such that \eqref{eq:HamModODE2} governs \eqref{eq:modODE}. We will then verify the validity of the construction method and prove \cref{thm:Main} and \ref{thm:HJmod} in \cref{sec:proofs}.

Let $\mathcal{L}^{[N]}_\Delta$ denote the series expansion of
\[L_{\Delta}\left(y(t),y(t+h),\ldots,y\left(t+\frac{s}{2}h\right)\right)\]
to order $N$ in the step-size $h$. The expression $\mathcal{L}^{[N]}_\Delta$ depends on the $M$-jet of $y$ at $t$ for some $M \in \N$. Notice that the order $M$ variational principle
\begin{equation}
\label{eq:SDeltaM}
\delta \mathcal{S}^{[N]}_\Delta(y) =0 \quad \text{with} \quad \mathcal{S}^{[N]}_\Delta(y) = \int \mathcal{L}^{[N]}_\Delta(y(t),\dot{y}(t),\ldots,y^{(M)}(t))\,\d t
\end{equation}
recovers \eqref{eq:SeriesMS} up to higher order terms. We first compute a high-dimensional Hamiltonian system defined on the $2M-1$-jet space of $y$ corresponding to the order $M$ variational principle \eqref{eq:SDeltaM}. The Hamiltonian principle is then reduced to a Hamiltonian system defined on the 1-jet space of $y$. It has the form \eqref{eq:HamModODE} and covers \eqref{eq:modODE} up to higher order terms.


To construct the high-dimensional Hamiltonian system, we use Ostrogradsky's Hamiltonian description of high-order Lagrangian systems \cite{whittaker1988}. For this we define variables
\[q = (y,\dot y,\ldots, y^{(M-1)})\] and for $i=1,\ldots,M$
\[
p_i^j 
= \sum_{k=0}^{M-i} (-1)^k \frac{\d^{k}}{\d t^{k}}
\frac{\p \mathcal{L}^{[N]}_\Delta}{\p (y_j)^{(k+i)}}.
\]
Here the index $j$ enumerates the components of $y$ and $\frac{\d}{\d t}$ denotes the total derivative operator on the jet-space of $y$, which acts like
\[
\frac{\d}{\d t} \rho\left(y,\dot y,\ldots,y^{(M)}\right)
= \sum_{i=0}^{M} \left\langle \nabla_{y^{(i)}}\rho, y^{(i+1)}\right\rangle.
\]
on a scalar valued function $\rho$ defined on the $M$-jet space of $y$.
The high dimensional Hamiltonian system consists of the Hamiltonian
\begin{equation}
\label{eq:Htilde}
\mathcal H^{[N]} = \sum_{i=1}^M \langle p_i , \dot{q}^i \rangle - \mathcal{L}^{[N]}_\Delta,
\end{equation}
where all expressions are expressed in $y^{[2M-1]}=(y,\dot y,\ldots,y^{(2M-1)})$, and the symplectic structure matrix $\mathcal{J}^{[N]}_\mod(y^{[2M-1]})$. The skew-symmetric matrix $\mathcal{J}^{[N]}_\mod(y^{[2M-1]})$ is the representing matrix of the differential 2-form
\begin{equation}
\label{eq:Omega}
\Omega^{[N]}=\sum_{i=1}^M \sum_{j=1}^n \d p_i^j \wedge \d q^i_j,
\end{equation}
where $p_i^j$ and $q^i_j$ are interpreted as functions in the variable $y^{[2M-1]}$ of the $2M-1$-jet space, i.e.\ $\mathcal{J}^{[N]}_\mod$ is the anti-symmetrised tensor product\footnote{This corresponds to the command {\tt TensorWedge} in Wolfram Mathematica.} $\wedge$ of the gradients $\nabla_{y^{[2M-1]}} p_j^i$ and $\nabla_{y^{[2M-1]}} q^i_j$ summed over all indices.

To compute the modified Hamiltonian system on the 1-jet space with variable $y^{[1]}=(y,\dot{y})$, the variables $y^{(2)},\ldots,y^{(2M-1)}$ in the expression \eqref{eq:Htilde} for the Hamiltonian $\mathcal H^{[N]}(y^{[2M-1]})$ are repeatedly replaced by \eqref{eq:modODE} until higher derivatives only occur in $\mathcal{O}(h^{N+1})$ terms. This yields $H^{[N]}_\mod(y,\dot{y})$. Similarly, we can consider $p_i^j$ and $q^i_j$ as functions of $(y,\dot{y})$ truncating $\mathcal{O}(h^{N+1})$ terms. The matrix $J^{[N]}_\mod$ is then given as the representing matrix of the 2-form $\Omega^{[N]}$ pulled to the 1-jet space of the variable $y$, i.e.\ interpreting $y^{[1]}=(y,\dot{y})$ as the only independent variables.
Equivalently, $J^{[N]}_\mod$ is the anti-symmetrised tensor product $\wedge$ of the gradients $\nabla_{y^{[1]}} p_j^i$ and $\nabla_{y^{[1]}} q^i_j$ summed over all indices. (As $J^{[N]}_{\mod}$ is constructed from a closed differential 2-form, it is automatically skew-symmetric and satisfies the Jacobi identity.)
This completes the construction of the modified data.

The system \eqref{eq:HamModODE} recovers \eqref{eq:modODE} up to higher order terms as we will prove after a computational example.

\section{Computational example}\label{sec:CompEx}

As introduced in \cref{sec:MSApplication}, consider the multistep method
\begin{equation}\label{eq:MSEx}
A_2 y(t-2h) + A_1 y(t-h) - 4y(t) + A_1 y(t+h) + A_2 y(t+2h) = h^2\nabla U(y(t))
\end{equation}
with matrix coefficients in dimension $n=2$. By the consistency requirement, $A_2= 1/4(I-A_1)$. We obtain
\[
J_\mod^{[4]} = J 
+ h^2 \begin{pmatrix}
J^2_{11} & -J^2_{21}\\
J^2_{21} & 0
\end{pmatrix}
+ h^4 \begin{pmatrix}
J^4_{11} & -J^4_{21}\\
J^4_{21} & J^4_{22}
\end{pmatrix}
\]
with
\[
J^2_{11} = \begin{pmatrix}
0&-b_1\\
b_1&0\\
\end{pmatrix}
\]
where
\begin{align*}
b_1&=\frac{1}{4} \Big(\dot{y}_2 \left(a_{12}(U^{(2,1)}-U^{(0,3)}) +\left(a_{22}-a_{11}\right) U^{(1,2)}\right)\\
&+\phantom{\frac{1}{4} \Big(}\dot{y}_1 \left(-a_{12}(U^{(2,1)}-U^{(0,3)})
+\left(a_{22}-a_{11}\right)U^{(2,1)} \right)\Big).
\end{align*}
Here and in the following $U^{(k,l)} = \frac{\p^{k+l}U(y)}{\p^k y_1 \p^l y_2}$.
As the expressions of higher order terms become quite complicated, we refer the reader to the Mathematica Notebooks of our accompanying source code \cite{ConjugateSymplecticSoftware}. However, as this will be relevant in the discussion later, we are reporting $J^4_{22}$ for the special case that $A_1$ is a diagonal matrix: we have
\[
J^4_{22} = \begin{pmatrix}
0&-b_2\\
b_2&0\\
\end{pmatrix}
\]
with
\[
b_2= \frac{1}{8}(a_{11}^2-a_{22}^2 + 2(a_{22}-a_{11}))(U^{(2,1)} \dot{y}_1 + U^{(1,2)} \dot{y}_2) \quad \text{if} \quad A_1 = \begin{pmatrix}
a_{11}&0\\ 0& a_{22}
\end{pmatrix}.
\]

The modified Hamiltonian $H^{[4]}_\mod$ for the general case is given as
\[
H^{[4]}_\mod(y,\dot y)
= H(y,\dot y) + h^2 H_2(y,\dot y) + h^4 H_4(y,\dot y)
\]
with
\begin{align*}
H_2(y,\dot y) &=
\frac{1}{24} \Big((\dot{y}_1){}^2 (2 (4-3 a_{11}) U^{(2,0)}-6 a_{12} U^{(1,1)})\\
&-2 \dot{y}_2 \dot{y}_1 (3 a_{12} U^{(0,2)}+(3 a_{11}+3 a_{22}-8) U^{(1,1)}+3 a_{12} U^{(2,0)})\\
&-6 a_{22} U^{(0,2)} (\dot{y}_2){}^2-6 a_{12} U^{(1,1)} (\dot{y}_2){}^2\\
&+(3 a_{22}-4) (U^{(0,1)})^2+3 a_{11} (U^{(1,0)})^2\\
&+6 a_{12} U^{(0,1)} U^{(1,0)}+8 U^{(0,2)} (\dot{y}_2){}^2-4 (U^{(1,0)})^2\Big).
\end{align*}

For further terms, we refer the reader to the Mathematica Notebooks of our accompanying source code.
Hamilton's equations
\[
\dot{z} = (J^{[4]}_\mod(z))^{-1} \nabla H^{[4]}_\mod(z)
\]
for the modified Hamiltonian system are equivalent to the modified equation \eqref{eq:modODE} truncating terms of order $\mathcal{O}(h^6)$.

\Cref{fig:MSEx} shows a numerical experiment with a rotational invariant potential.
The start values for the multistep formula were obtained using the fourth order modified equation. 
Trajectories computed with the multistep scheme look very regular.
The quantities $H^{[0]}_\mod =H$, $H^{[2]}_\mod$, and $H^{[4]}_\mod$ evaluated along a trajectory show oscillatory energy error behaviour. Experiments with different values for the step-size $h$ confirm the preservation of $H^{[k]}_\mod$, $k=0,2,4$ up to truncation error.
Initialising the multistep scheme with the fourth order modified equation, the effects of spurious solutions was minimised. However, as $h$ is decreased, spurious solutions cause wriggles in the energy error $H^{[k]}_\mod-H(z_{\mathrm{init}})$, $k=0,2,4$ which eventually prevent further energy error decay. 

\begin{figure}
	\includegraphics[width=0.45\linewidth]{./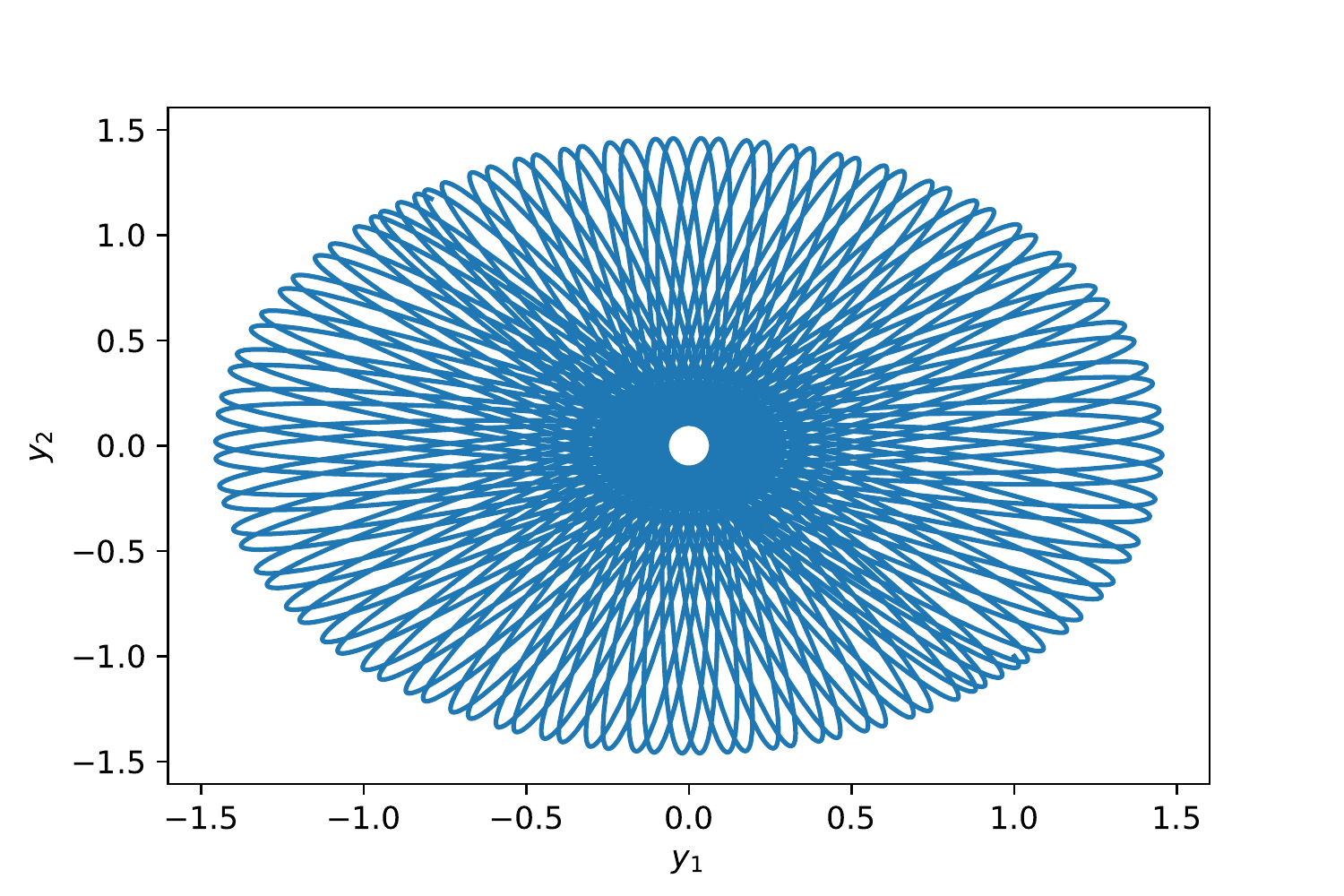}\\
	\includegraphics[width=0.45\linewidth]{./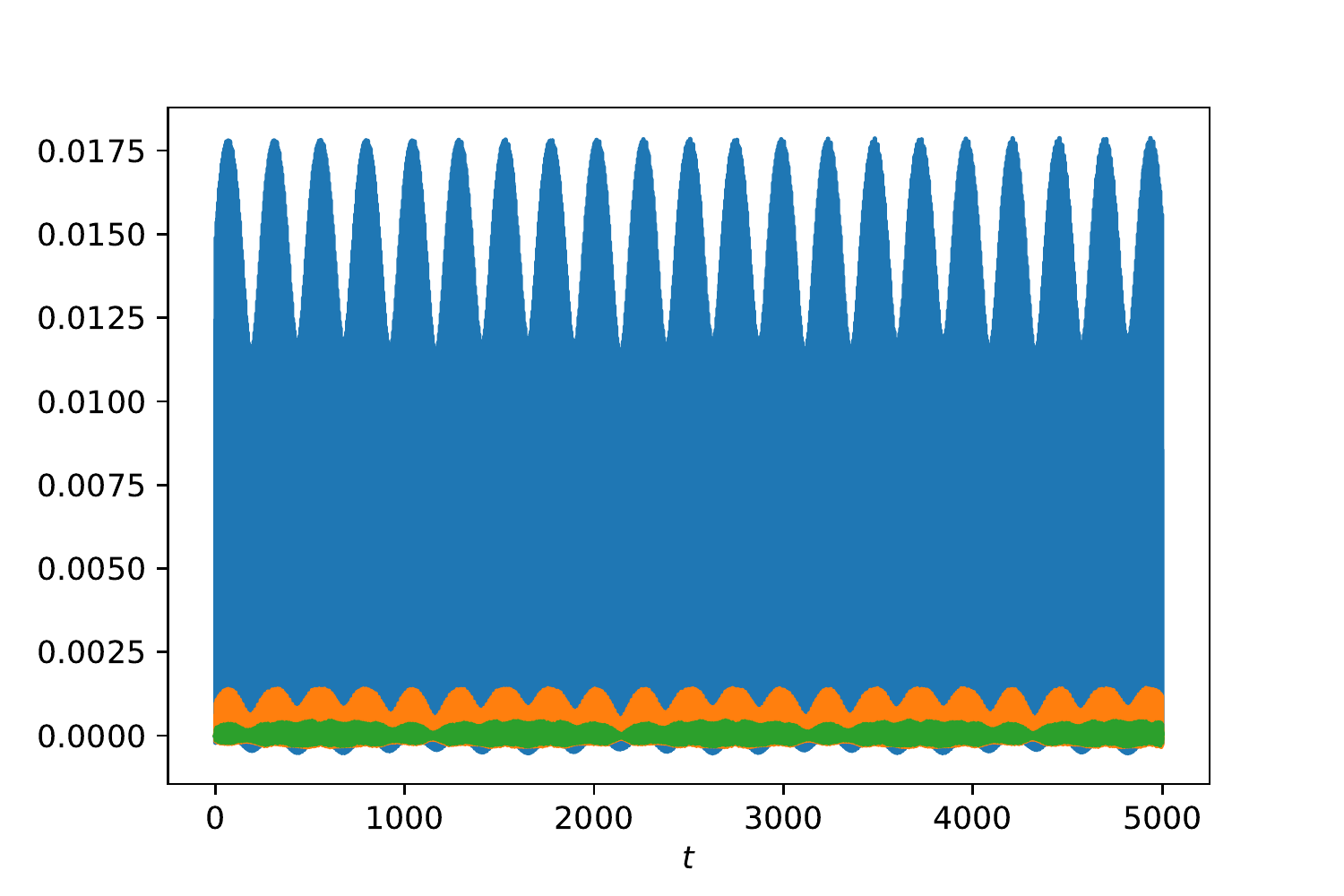}
	\includegraphics[width=0.45\linewidth]{./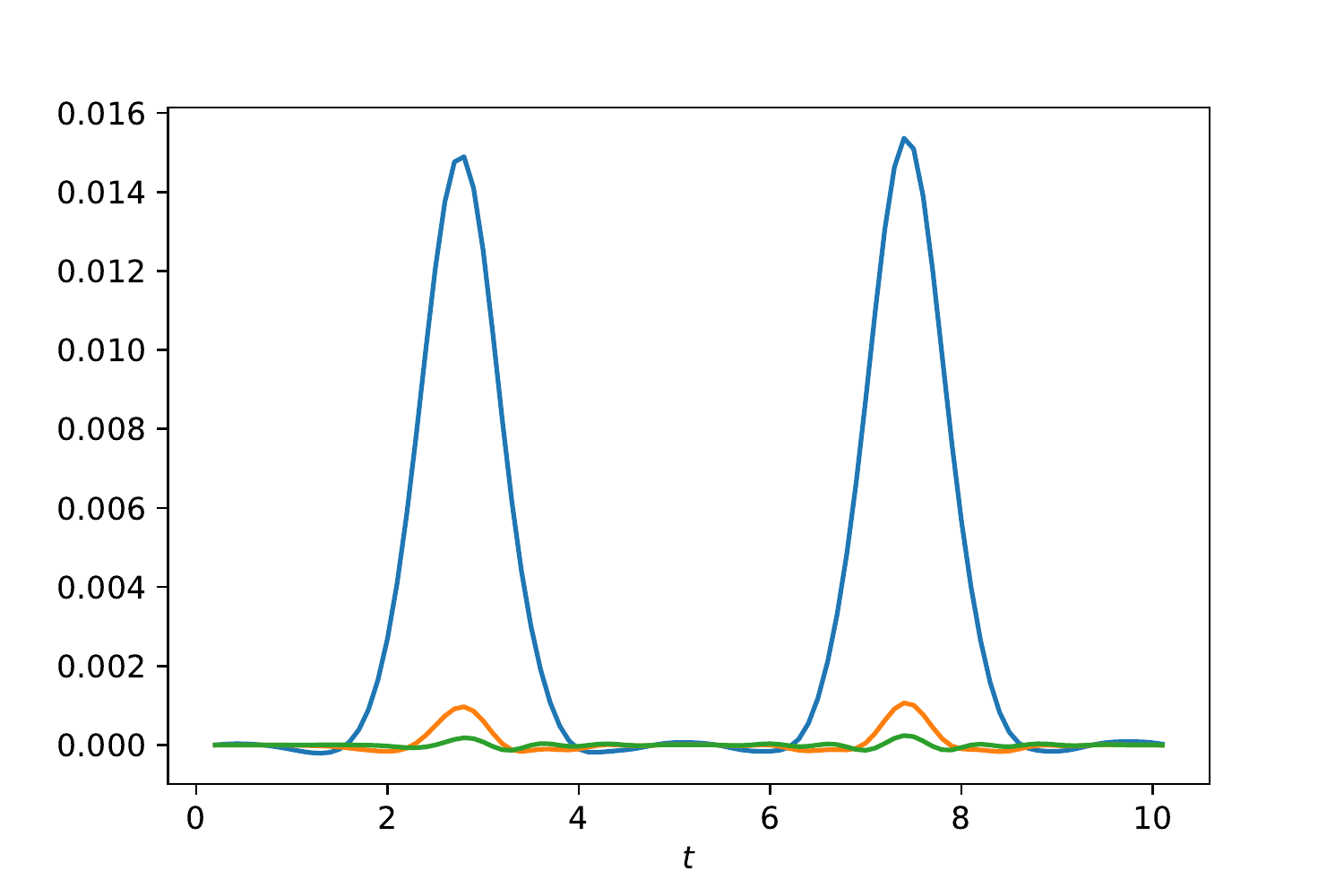}
	\caption{Numerical experiment with the multistep scheme \eqref{eq:MSEx} with $A_2 = \diag(0.85,1.25)$, $A_1 = I-4A_2$, $U(y) = \exp\left(-\frac{1}{2}(y_1^2 + y_2^2)\right)$ and time-step $h=0.1$.
		The multistep formula was initialised at times $0$, $h$, $2h$, $3h$ by integrating the order 4 modified equation using very fine Euler-steps starting from the initial value $(y_{\mathrm{init}},\dot{y}_{\mathrm{init}})=((1,-1),(0.1,-0.2))$.
		The figure at the top shows a trajectory up to time $t=500$, which looks like an orbit in a completely integrable system. 
		The figures below show an evaluation along the trajectory of $H-H((y_{\mathrm{init}},\dot{y}_{\mathrm{init}}))$ (blue) as well as $H^{[2]}_\mod-H^{[2]}_\mod((y_{\mathrm{init}},\dot{y}_{\mathrm{init}}))$ (orange) and $H^{[4]}_\mod-H^{[4]}_\mod((y_{\mathrm{init}},\dot{y}_{\mathrm{init}}))$ (green) up to time $t=5000$ and $t=10$, respectively. We see oscillatory energy error behaviour.
	}\label{fig:MSEx}
\end{figure}


If $A_2 = \diag(\alpha,\alpha)$ with $\alpha \ge \frac{1}{4}$ and $A_1 = I-4A_2$, then \eqref{eq:MSEx} corresponds to a classical stable\footnote{A multistep scheme for second order equations is stable if all roots of its generating polynomial lie in the closed unit disk and those on the circle are at most double zeros \cite[XV.1.2]{GeomIntegration}.} multistep scheme: the generating polynomial $\rho$ to \eqref{eq:MSEx} is given as
\[
\rho(\xi)=\alpha +(1-\alpha) \xi -2(1-3 \alpha) \xi^2+ (1-4 \alpha) \xi^3 + \alpha \xi^4.
\]
The polynomial $\rho$ has a double root at 1 as well as the roots
\[
\xi_{3} = 1-\frac{1}{2 \alpha} + \frac{\sqrt{1-4\alpha}}{2\alpha}, \quad \xi_{4} = 1-\frac{1}{2 \alpha} - \frac{\sqrt{1-4\alpha}}{2\alpha}.
\]
Since $\alpha \ge \frac{1}{4}$, the roots $\xi_3$ and $\xi_4$ are complex conjugate to each other and lie on the unit circle such that the scheme is stable.

Moreover, $L_\Delta$ and $\mathcal{L}_\Delta$ are rotationally invariant because $A_1$ and $A_2$ commute with rotation matrices. An application of Noether's theorem to $\mathcal{L}^{[N]}_\Delta$ yields the following modified angular momentum:
\[
\mathcal{I}^{[N]} = \sum_{m=1}^M \sum_{k=0}^{m-1} (-1)^k \left\langle \nabla_{y^{k}} \mathcal{L}^{[N]}_\Delta , \d R y^{(m-1-k)} \right\rangle, \quad \d R=\begin{pmatrix}0&-1\\1&0\end{pmatrix}
\]
The integer $M$ is the order of the highest derivative of $y$ in $\mathcal{L}^{[N]}_\Delta$. For the truncation order $N=4$ we have $M=5$. Using \eqref{eq:modODE} repeatedly, derivatives of $y$ of order greater than two are replaced by terms in $y$, $\dot{y}$ and $\mathcal{O}(h^6)$-terms. After truncation at order 4 in $h$ we obtain the modified angular momentum $I^{[4]}_\mod(y,\dot{y})$. \Cref{fig:MSExDiag} shows the conservation error of $I=I^{[0]}_\mod$, $I^{[2]}_\mod$, $I^{[4]}_\mod$ along a trajectory. We see oscillatory error behaviour. The oscillations shrink as the step-size $h$ decreases until spurious solutions prevent further decrease.

\begin{figure}
	\begin{center}
		\includegraphics[width=0.45\linewidth]{./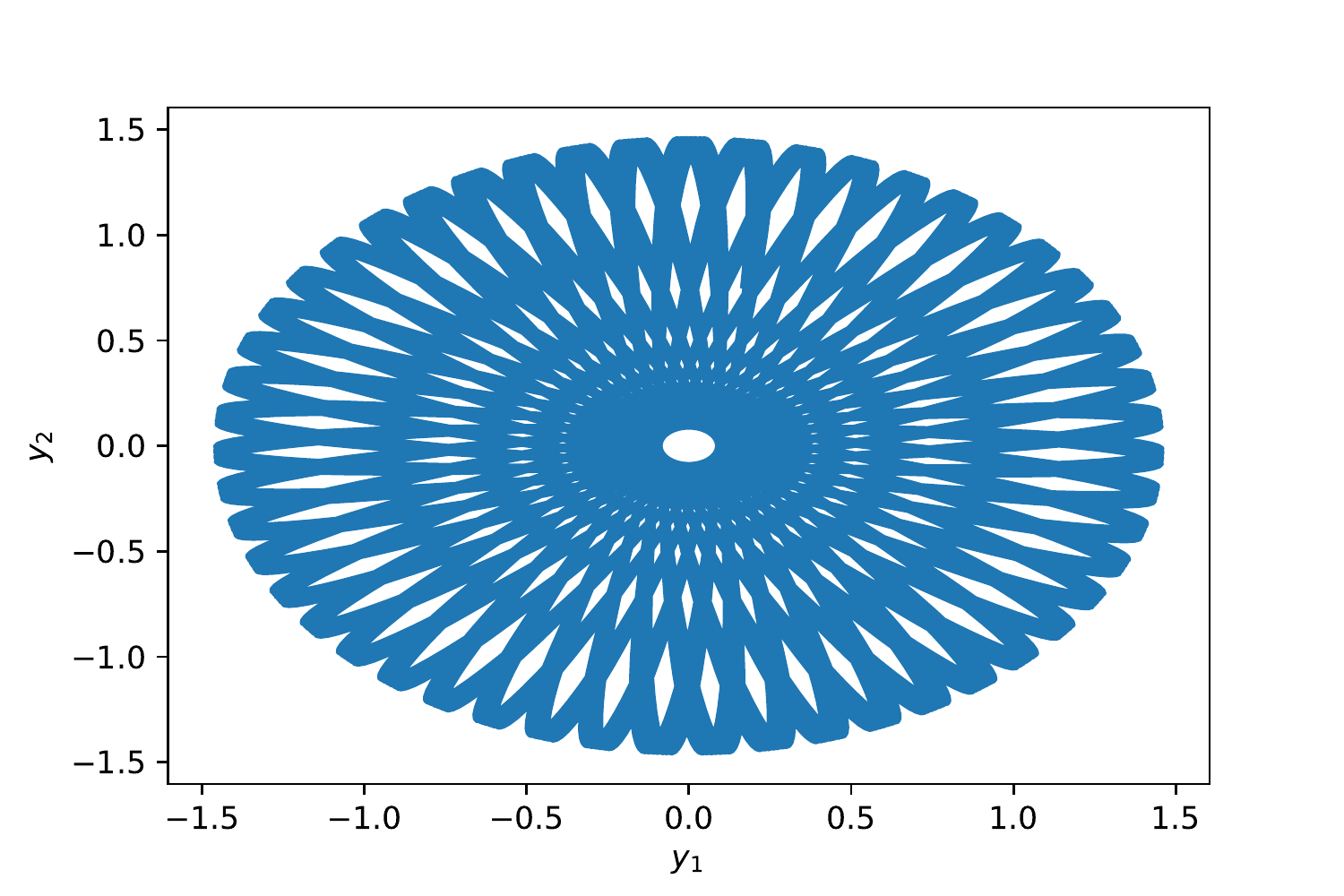}\\
		\includegraphics[width=0.45\linewidth]{./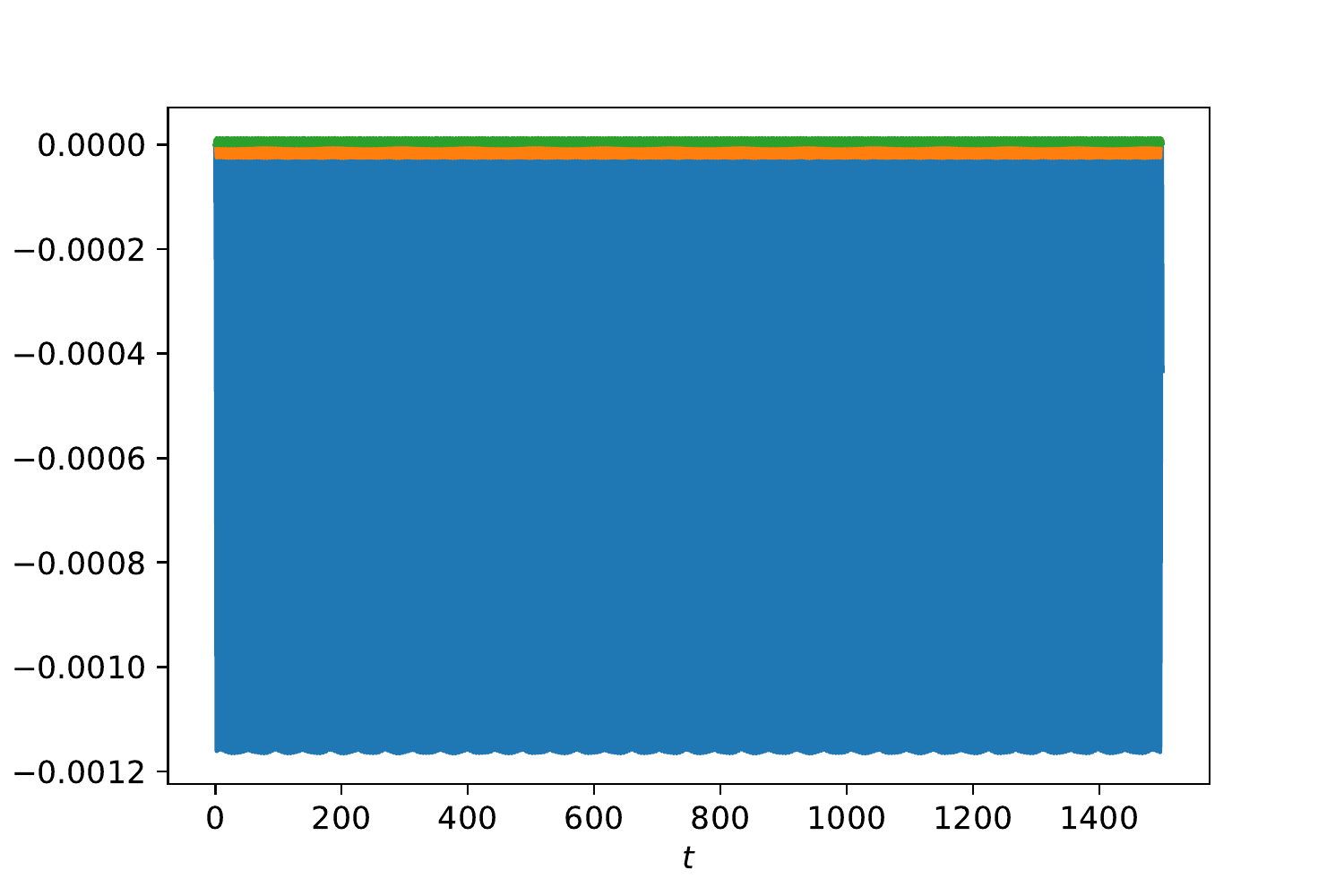}
		\includegraphics[width=0.45\linewidth]{./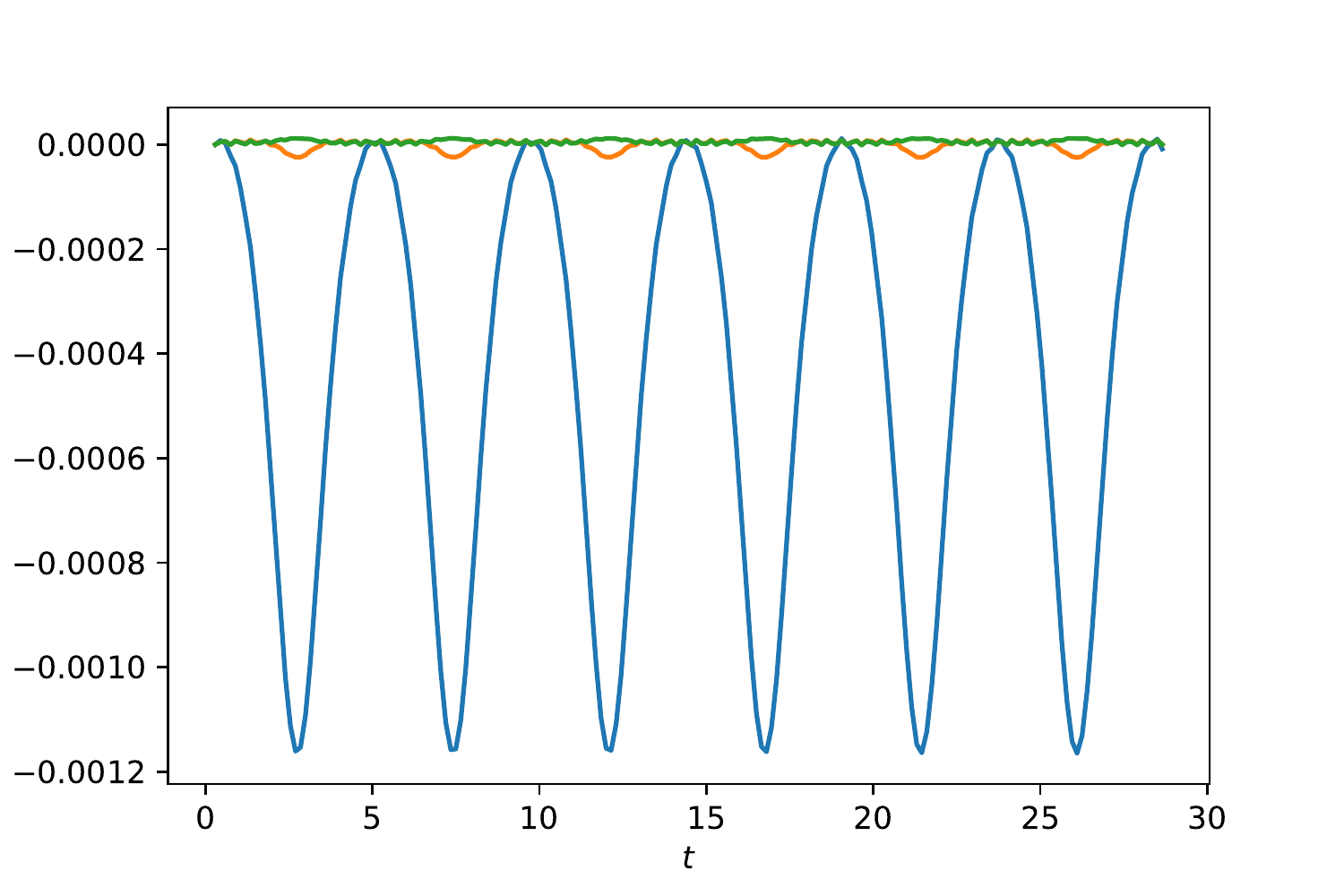}
		
	\end{center}
	\caption{The plots in the second row show the error $I-I(y_{\mathrm{init}},\dot{y}_{\mathrm{init}})$ (blue) in the angular momentum and $I^{[2]}_\mod-I^{[2]}_\mod(y_{\mathrm{init}},\dot{y}_{\mathrm{init}})$ (orange) and $I^{[4]}_\mod-I^{[4]}_\mod(y_{\mathrm{init}},\dot{y}_{\mathrm{init}})$ (green) along a trajectory. The same data as in \cref{fig:MSEx} was used apart from $A_2=\diag(0.3,0.3)$ and $h=0.15$. The plot at the top shows a trajectory initialised as in \cref{fig:MSEx}.}\label{fig:MSExDiag}
\end{figure}

For source code of our experiments refer to \cite{ConjugateSymplecticSoftware}.

\section{Validity of the construction method}\label{sec:proofs}

Let us prepare the proof of \cref{thm:Main}. 
We will use the language of Differential Geometry.

To motivate the following proposition and to fix notation recall the following fact: if $(Z',\omega',H')$ is a Hamiltonian system with Hamiltonian vector field $X'_{H'}$, $Z \xhookrightarrow{\Psi}Z'$ an embedding such that $\Psi(Z)$ is a symplectic submanifold that is invariant under motions, i.e.\ $X'_{H'}(\Psi(z)) \in T_{\Psi(z)} \Psi(Z)$ for all $z \in Z$, then the pull-back system $(Z,\omega,H)$ with $\omega = \Psi^\ast \omega'$, $H = H' \circ \Psi$ is a Hamiltonian system with Hamiltonian vector field $X_H$. Here $\Psi^\ast \omega'$ denotes the pull-back of $\omega'$ along $\Psi$. The Hamiltonian vector fields $X_H$ and $X'_{H'}$ relate by
\[
\Psi_\ast X_H = X'_{H'} \circ \Psi,
\]
i.e.\ for a motion $\gamma$ with $\dot{\gamma} = X_H \circ \gamma$ the curve $\gamma' = \Psi \circ \gamma$ is a motion of $(Z',\omega',H')$, i.e.\ $\dot{\gamma}'=X'_{H'} \circ \gamma'$. Here $\Psi_\ast$ denotes the push-forward map, i.e.\ $(\Psi_\ast X_H)(z) = \d \Psi|_z(X_H(z))$ for $z \in Z$.

In the following, we will adapt this statement to a setting, where the definition of $\omega'$, $H'$, and $\Psi'$ contain a formal variable $h$ and where $\Psi(Z)$ is left invariant only up to higher order terms in $h$.

\begin{definition}[Formal Hamiltonian system]
Let $N \in \N$ be a truncation index, $Z$ be a smooth manifold, $h$ a formal variable, and $H = \sum_{k=0}^N h^k H_k$ a formal polynomial whose coefficients are smooth maps $Z \to \R$. Further, let $\omega=\sum_{k=0}^{ N} h^k \omega_k $ be a formal polynomial whose coefficients are 2-forms on $Z$. 
The collection $(Z,\omega,H)$ is called a {\em formal Hamiltonian system with truncation index $N$} if the following conditions are satisfied.
\begin{itemize}
\item 
The formal symplectic form $\omega$  is closed, i.e.\ $\d \omega = \sum_{k=0}^{ N} h^k \d \omega_k $ is a formal polynomial whose coefficients are 3-forms that are zero.

\item 
The 2-form $\omega_0$ is non-degenerate.
\end{itemize}

The relation
$-\d H = \omega(X, \cdot)$
defines a formal power series $X = \sum_{k=0}^\infty h^k X_k $, where $X_k$ are vector fields on $Z$. The truncation $X_H := \sum_{k=0}^N h^k X_k $ is called {\em (formal) Hamiltonian vector field.}  The formal differential equation $\dot{\gamma} = X_H \circ \gamma$ is called {\em Hamilton's equation}.

\end{definition}

\begin{proposition}\label{prop:PullbackHam}
Let $N \in \N$ be a truncation index, $Z'$ and $Z$ be manifolds, and let $(Z',\omega',H')$ be a formal Hamiltonian system with truncation index $N$. Consider a formal polynomial $\Psi  = \sum_{k=0}^N \Psi_k h^k$ such that $\Psi_k \colon Z \to Z'$ are smooth and $\Psi_0 \colon Z \to Z'$ is an embedding. To $z \in Z$ define formal tangent spaces
\[
T_{\Psi(z)}\Psi(Z) := \left\{ \sum_{k=0}^{N} h^k \d \Psi_k|_z(v)  \Bigg | v \in T_z Z \right\} \subset \bigoplus_{k=0}^N h^k T_{z_k'}Z',
\]
where $z' := \sum_{k=0}^N h^k z_k'  := \Psi(z)$.
\begin{itemize}
\item
Assume that the Hamiltonian vector field is tangential to $\Psi(Z)$, i.e.\ $\trk (X'_{H'}(\Psi(z))) \in T_{\Psi(z)}\Psi(Z)$ for all $z \in Z$. Here
$\trk$ denotes the truncation of $\mathcal{O}(h^{N+1})$-terms.

\item 
Assume that $\Psi(Z)$ is a symplectic submanifold of $Z'$, i.e.\ for all $z \in Z$ 
\[
T_{\Psi(z)} \Psi(Z) \cap T_{\Psi(z)} \Psi(Z)^{\bot_{\omega'}} =0.
\]
Here $\Psi(Z)^{\bot_{\omega'}}$ denotes the symplectic complement
\[
T_{z'} \Psi(Z)^{\bot_{\omega'}}
= \left\{
v \in \bigoplus_{k=0}^N h^k T_{z_k'}Z' \Bigg | \trk(\omega'(v,w)) =0 \; \forall
w \in T_{z'} \Psi(Z)
\right\},
\]
where $z' := \sum_{k=0}^N h^k z_k'  := \Psi(z)$. 

\end{itemize}

Then the pull-back system $(Z,\omega,H)$ with $\omega = \Psi^\ast \omega'$, $H=H' \circ \Psi$ constitutes a formal Hamiltonian system with truncation index $N$ and
\[
\trk(\Psi_\ast X_H) = {X'}_{H'} \circ \Psi.
\]

\end{proposition}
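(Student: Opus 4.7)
The plan is to mimic the classical symplectic-reduction argument (a Hamiltonian vector field tangent to a symplectic submanifold pulls back to a Hamiltonian vector field for the pulled-back data), carrying it out order by order in the formal variable $h$.

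First I would check that $(Z,\omega,H)$ is a formal Hamiltonian system with truncation index $N$. Closedness is immediate since pullback commutes with $\d$ at every order, so $\d\omega = \d(\Psi^\ast \omega') = \Psi^\ast \d\omega' \equiv 0$ modulo $\mathcal{O}(h^{N+1})$. For non-degeneracy of $\omega_0 = \Psi_0^\ast \omega'_0$, the zero-order piece of the symplectic-submanifold hypothesis says exactly that $\omega'_0$ restricted to $\d\Psi_0(T_zZ) \subset T_{\Psi_0(z)}Z'$ is non-degenerate; combined with the injectivity of $\d\Psi_0$ (because $\Psi_0$ is an embedding), this transfers to non-degeneracy of $\omega_0$ on $Z$.

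Next I would construct a formal vector field $V = \sum_{k=0}^N h^k V_k$ on $Z$ satisfying $\trk(\Psi_\ast V) = X'_{H'}\circ \Psi$. Unpacked order by order, this is a sequence of linear equations of the form $\d\Psi_0|_z(V_n) = R_n$, where $R_n$ depends on the $n$th-order coefficient of $X'_{H'}\circ \Psi$ and on the already-determined $V_0,\ldots,V_{n-1}$. The tangentiality hypothesis and the shape of $T_{\Psi(z)}\Psi(Z)$ built into the statement guarantee, inductively, that $R_n$ lies in $\d\Psi_0(T_zZ)$; injectivity of $\d\Psi_0$ then gives a unique $V_n$. Iterating $n=0,\ldots,N$ produces $V$.

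Finally, I would identify $V$ with the Hamiltonian vector field $X_H$ of $(Z,\omega,H)$. For any formal vector field $W$ on $Z$ the usual naturality identities give, modulo $\mathcal{O}(h^{N+1})$,
\begin{align*}
\omega(V,W) &= (\Psi^\ast\omega')(V,W) = \omega'(\Psi_\ast V,\Psi_\ast W) \equiv \omega'(X'_{H'}\circ \Psi,\Psi_\ast W) \\
&= -(\d H')(\Psi_\ast W)\circ\Psi = -(\Psi^\ast \d H')(W) = -\d H(W).
\end{align*}
Non-degeneracy of $\omega_0$ implies that the defining relation $\omega(X_H,\cdot) = -\d H$ determines $X_H$ uniquely order by order in $h$, so $V \equiv X_H$ modulo $\mathcal{O}(h^{N+1})$, which yields $\trk(\Psi_\ast X_H) = X'_{H'}\circ \Psi$. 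The main obstacle, though more technical than conceptual, is the formal book-keeping: one must verify that $\d\circ\Psi^\ast = \Psi^\ast\circ\d$, naturality of contraction, and the pushforward/pullback identities all remain valid when every object is a formal polynomial in $h$ truncated at order $N$, and that the tangential and symplectic-submanifold hypotheses phrased through the formal tangent space $T_{\Psi(z)}\Psi(Z)$ are exactly what is needed to close both the inductive construction of $V$ and the uniqueness argument identifying $V$ with $X_H$.
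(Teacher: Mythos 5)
Your proof is correct, and it reaches the conclusion by a genuinely different organization of the argument, even though the central computation is the same. The paper defines $X_H$ intrinsically on $Z$ via $\omega(X_H,\cdot)=-\d H$, runs the naturality chain $\omega'(\d\Psi(X_H),\d\Psi(\cdot))=\omega(X_H,\cdot)=-\d H=-\Psi^\ast\d H'=\omega'(X'_{H'}\circ\Psi,\d\Psi(\cdot))$ to conclude that $\d\Psi(X_H)-X'_{H'}\circ\Psi$ lies in $T_{\Psi(z)}\Psi(Z)^{\bot_{\omega'}}$, and then invokes tangentiality plus the symplectic-submanifold hypothesis to kill this difference as an element of $T_{\Psi(z)}\Psi(Z)\cap T_{\Psi(z)}\Psi(Z)^{\bot_{\omega'}}$. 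You instead first build, by an order-by-order inversion of $\d\Psi_0$, the unique lift $V$ with $\trk(\Psi_\ast V)=X'_{H'}\circ\Psi$ (this is where you consume the tangentiality hypothesis), and only afterwards run the same naturality chain in the other direction to show $\omega(V,\cdot)=-\d H$, identifying $V$ with $X_H$ by uniqueness; the symplectic-submanifold hypothesis then enters only through the non-degeneracy of $\omega_0$. The two uses of the hypotheses are equivalent here (non-degeneracy of $\omega_0$ on $\d\Psi_0(T_zZ)$ and the formal condition $T_{\Psi(z)}\Psi(Z)\cap T_{\Psi(z)}\Psi(Z)^{\bot_{\omega'}}=0$ imply one another by a lowest-order argument), so nothing is lost; your route trades the paper's clean isotropy argument for an explicit induction, which has the advantage of making the lift $V$ and the role of $\d\Psi_0$-injectivity completely concrete, and of avoiding any manipulation of the formal symplectic complement. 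Two small points worth tightening: your claim that the zero-order piece of the symplectic-submanifold hypothesis yields non-degeneracy of $\omega_0$ deserves the one-line lowest-order argument just sketched (the paper's own justification, via ``$\Psi_0$ is an immersion,'' is if anything less careful than yours), and your induction implicitly reads the element $v$ in the definition of $T_{\Psi(z)}\Psi(Z)$ as a formal polynomial in $h$ with coefficients in $T_zZ$ — this is the interpretation under which the proposition is actually applied in the paper, so you should state it.
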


\begin{proof}
	Let $\omega = \sum_{k=0}^N h^k\omega_k $
	Since $\Psi_0$ is an embedding, it is an immersion and we conclude that $\omega_0$ is non-degenerate as $\omega_0'$ is non-degenerate. Since pull-back and the differential $\d $ commute, $\omega$ is closed.

	In the following calculations we identify two formal polynomials $P_1$ and $P_2$ with coefficients of the same type (real numbers, $n$-forms, smooth functions, $\ldots$) if and only if $P_1-P_2 \in \mathcal{O}(h^{N+1})$.
For all $z \in Z$ 
we have
\begin{align*}
	-\omega'_{\Psi(z)}( \d \Psi|_z(X_H(z)), \d \Psi|_z (\cdot))
	&= -(\Psi^\ast\omega')_{z}( X_H(z), \cdot)
	= -\omega_{z}( X_H(z), \cdot)\\
	&= \d H|_z
	=\d (H' \circ \Psi)|_z
	=\d H'|_{\Psi(z)} \circ \d \Psi|_z\\
	&= -\omega'_{\Psi(z)}( X'_{H'}(\Psi(z)), \d \Psi|_z(\cdot))
\end{align*}
\begin{align*}
&\implies
\omega'_{\Psi(z)}( \d \Psi|_z(X_H(z))-X'_{H'}(\Psi(z)), \d \Psi|_z (\cdot)) =0\\
&\implies
 \d \Psi|_z(X_H(z))-X'_{H'}(\Psi(z)) \in T_{\Psi(z)}\Psi(Z)^{\bot_{\omega'}}.
\end{align*}

As the inclusion $\d \Psi|_z(X_H(z))-X'_{H'}(\Psi(z)) \in T_{\Psi(z)}\Psi(Z)$ holds as well and $\Psi(Z)$ is a symplectic submanifold, it follows that 
\[
\d \Psi|_z(X_H(z))=X'_{H'}(\Psi(z)). \qedhere
\]
\end{proof}

We can now proceed to the proof of \cref{thm:Main}. 

\begin{proof}
	{\em Step 1. Construction and validity of the modified equation.}
The truncated power series $\mathcal{L}_\Delta^{[N]}$ is a formal polynomial of the form
\[
\mathcal{L}_\Delta^{[N]}(y^{[M]}) 
= L(y,\dot{y}) + \sum_{k=1}^N h^k \mathcal{L}_\Delta^{k}(y^{[M]}),
\]
where $h$ is the formal variable.
Since the Lagrangian $L$ is regular, the Euler--Lagrange equations to $\mathcal{L}_\Delta^{[N]}$
\[
\sum_{j=0}^M (-1)^j \frac{\d }{\d t} \left( \nabla_{y^{(j)}} \mathcal{L}_\Delta^{[N]}(y^{[M]}) \right)=0
\]
are equivalent to an ordinary differential equation of the form
\begin{equation}\label{eq:ModODEHigh}
\ddot{y} = g_0(y,\dot{y}) + \sum_{k=1}^N h^k \tilde{g}_k (y^{[2M]})
\end{equation}
when truncating terms of order $\mathcal{O}(h^{[N+1]})$.
The ordinary differential equation is of order $2M$ because $\mathcal{L}_\Delta^{[N]}(y^{[M]})$ is regular as well.
Iterative replacements of derivatives of order $j\ge 2$ by derivatives of \eqref{eq:ModODEHigh} yield the modified equation

\begin{equation}\label{eq:ModODE}
\ddot{y} = g_0(y,\dot{y}) + \sum_{k=1}^N h^k g_k (y,\dot{y}) + \mathcal{O}(h^{[N+1]}).
\end{equation}


Solutions to
\begin{equation}\label{eq:ModODETruncated}
	\ddot{y} = g_0(y,\dot{y}) + \sum_{k=1}^N h^k g_k (y^{[M]})
\end{equation} 
fulfil \eqref{eq:ModODEHigh} up to $\mathcal{O}(h^{[N+1]})$ terms by construction. This proves the first part of \cref{thm:Main}.

{\em Step 2. Existence of Hamiltonian structure on a higher jet-space.}
Let $Y$ denote the domain of $y$ (smooth manifold), $Z = \Jet^1(Y)$ the 1-jet space, and $Z' = \Jet^{2M-1}(Y)$.
The iterative substitution procedure gives rise to a formal polynomial $\Psi$ of maps $Z \to Z'$ defined by $y^{[1]} \mapsto y^{[2M-1]}$, where $y^{(j)}$ for $j\ge 2$ is expressed as a formal polynomial of functions depending on $y^{[1]} = (y,\dot{y})$.
The expression for $y^{(j)}$ is obtained by deriving \eqref{eq:ModODEHigh} $j-2$ times, iteratively replacing derivatives of order greater than 2 by derivatives of \eqref{eq:ModODEHigh} followed by a truncation of $\mathcal{O}(h^{[N+1]})$ terms.

In the remainder of this part of the proof we suppress the fact that we operate on formal polynomials as the following steps can also be done when $h$ is substituted with a sufficiently small real number $h>0$.


In the following, we will show that there exists a Hamiltonian structure $(Z',\omega',H')$ on $Z'$ whose motions correspond to the motions induced by the order $M$-Lagrangian $\mathcal{L}_\Delta^{[N]}(y^{[M]})$. To compute the Hamiltonian structure $(Z',\omega',H')$, we first construct a Hamiltonian system on $T^\ast \Jet^{M-1}(Y)$ which we will then pull back to $Z'$.

As the Lagrangian $\mathcal{L}_\Delta^{[N]}(y^{[M]})$ is regular, by Ostrogradsky's principle for high order Lagrangians \cite{whittaker1988}, there exists a transformation $\chi \colon Z' \to T^\ast \Jet^{M-1}(Y)$ between the jet variable $y^{[2M-1]}\in Z'$ and variables $(q,p) \in T^\ast \Jet^{M-1}(Y)$ with
\[ q = (y,\dot y,\ldots, y^{(M-1)})\]
and
\[
p_i
= \sum_{k=0}^{M-i} (-1)^k \frac{\d^{k}}{\d t^{k}}
\nabla_{y^{(k+i)}}\mathcal{L}^{[N]}_\Delta(y^{[M]}), \quad i=1,\ldots,M
\]
such that with 
\[
\Omega=\sum_{i=1}^M \sum_{j=1}^n \d p_i^j \wedge \d q^i_j,
\quad \text{and} \quad
\mathcal{H}^{[N]}(q,p) = \sum_{k=1}^M \langle p_k,q^k \rangle - \mathcal{L}_\Delta^{[N]}
\]
the motions of \eqref{eq:ModODEHigh} are exactly mapped to the motions of the Hamiltonian vector field $X_{\mathcal{H}^{[N]}}$ on $T^\ast \Jet^{M-1}(Y)$ with
\begin{equation}\label{eq:BigOmegaVf}
-\d \mathcal{H}^{[N]} = \Omega(X_{\mathcal{H}^{[N]}}, \cdot)
\end{equation}
by the transformation $\chi^{-1}$. 
Let $\omega' = \chi^\ast \Omega$ and $H' = \mathcal{H}^{[N]} \circ \chi$. Now $(Z',\omega',H')$ is a Hamiltonian system on $Z'$ whose motions are exactly the motions induced by the Lagrangian structure.
This completes step 2 of the proof.

\begin{remark}
In the setting of formal polynomials, $\Omega$ is a two form whose coefficients are formal polynomials or, alternatively, $\Omega$ is a formal polynomial whose coefficients are 2-forms. In this case, \eqref{eq:BigOmegaVf} defines a formal series $X_{\mathcal{H}^{[N]}}$ in $h$ from which we truncate $\mathcal{O}(h^{N+1})$ terms. This is also done when defining ${X'}_{H'}$ through $-\d H' = \omega'({X'}_{H'}, \cdot)$. The differential equation defined by ${X'}_{H'}$ is then equivalent to the differential equation induced by the Lagrangian structure up to $\mathcal{O}(h^{N+1})$ terms. 
\end{remark}

{\em Step 3. Pull-back of $(Z',\omega',H')$ to $Z$.}
As the Lagrangian $L$ is regular, the 2-form $\omega_0'=\sum_{j=1}^n\d y_j \wedge \d \mathfrak{p}^j$, where $\mathfrak{p}$ is obtained by the Legendre transform for $L$, is a symplectic form.
The pull-back form $\omega=\Psi^\ast\omega'$ is a formal polynomial in $h$, whose coefficients are 2-forms. The zeroth coefficient $\omega_0$ coincides with $\omega_0'$. $\omega$ is, therefore, non-degenerate and $\Psi(Z)$ is a symplectic submanifold of $Z'$ in the sense of \cref{prop:PullbackHam}. Moreover, by construction of $H'$ and $\Psi$ the condition $\trk (X'_{H'}(\Psi(z))) \in T_{\Psi(z)}\Psi(Z)$ for all $z \in Z$ is fulfilled. Therefore, \cref{prop:PullbackHam} completes the theorem's proof.
%
%
%
%
%
%
%
%
\end{proof}

\begin{proof}[Proof of \cref{thm:HJmod}]
	The construction method of the modified data $J^{[N]}_\mod$ and $H^{[N]}_\mod$ coincides with the construction method verified in the proof of \cref{thm:Main}.

\end{proof}

\section{On the existence of modified Lagrangians}\label{sec:ExistenceLmod}

\begin{proposition}\label{prop:HamS}
All Hamiltonian systems 
\[
\dot{z}(t) = \bar{J}^{-1}(z(t))\nabla \bar{H}(z(t)), \quad z=\begin{pmatrix}
y\\ \dot y
\end{pmatrix}
\]
with a fixed symplectic structure represented by $\bar{J}$ can be formulated as variational problems of the form
\begin{equation}
\label{eq:VarPrincSbar}
\delta \bar{S} =0
\quad
\text{for}
\quad
\bar{S}(y) = \int \bar{L}(y(t),\dot{y}(t))\d t
\end{equation}
if and only if $\bar{J}$ is of the form
\begin{equation}\label{eq:Jbar}
\bar{J} = \begin{pmatrix}
\ast & \ast\\
\ast & 0_n
\end{pmatrix},
\end{equation}
where $0_n$ denotes an $n\times n$-dimensional zero matrix with $n$ the dimension of $y$, (i.e.\ if the distribution spanned by the vector fields $\frac{\p}{\p \dot{y}_1},\ldots,\frac{\p}{\p \dot{y}_n}$ is Lagrangian).
\end{proposition}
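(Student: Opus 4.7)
The plan is to establish necessity by a direct Legendre-transform computation, and sufficiency via a fiberwise Poincar\'e lemma combined with the Pontryagin--Hamilton action principle. Denote $z=(y,v)$, where $v$ plays the role of the $\dot y$-coordinate on phase space. By skew-symmetry of $\bar J$ and a dimension count, the vertical distribution $\mathrm{span}\{\partial/\partial v_1,\ldots,\partial/\partial v_n\}$ is Lagrangian precisely when the $(v,v)$-block of $\bar J$ vanishes, so the two conditions appearing in the proposition are equivalent geometric formulations of the same property.

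For necessity, suppose that a regular $\bar L(y,\dot y)$ exists whose Euler--Lagrange equations coincide with $\dot z=\bar J^{-1}\nabla\bar H$. The Legendre transform $\phi\colon(y,\dot y)\mapsto(y,p)$ with $p=\partial \bar L/\partial\dot y$ is a local diffeomorphism intertwining the EL flow with the canonical Hamilton flow on $T^\ast Y$ carrying $\omega_{\mathrm{can}}=\sum_i dp_i\wedge dy_i$, so $\bar J$ represents $\phi^\ast\omega_{\mathrm{can}}$. Expanding,
\[
\phi^\ast\omega_{\mathrm{can}}=\sum_{i,j}\frac{\partial p_i}{\partial y_j}\,dy_j\wedge dy_i+\sum_{i,j}\frac{\partial p_i}{\partial\dot y_j}\,d\dot y_j\wedge dy_i,
\]
which contains no $d\dot y\wedge d\dot y$ term; hence the $(\dot y,\dot y)$-block of $\bar J$ vanishes, which is the asserted form.

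For sufficiency, the strategy is to gauge-fix a primitive of $\omega$ so that the Pontryagin--Hamilton principle reduces to a first-order Lagrangian in $y$ alone. Locally choose any $\eta$ with $\omega=-d\eta$ and decompose $\eta=\eta_y^i(y,v)\,dy_i+\eta_v^i(y,v)\,dv_i$. The vanishing of the $(v,v)$-block of $\bar J$ translates to $\partial_{v_j}\eta_v^i-\partial_{v_i}\eta_v^j=0$; a fiberwise Poincar\'e lemma in $v$ produces $f(y,v)$ with $\eta_v^i=\partial f/\partial v_i$, and replacing $\eta$ by $\theta:=\eta-df$ yields another primitive with $\theta=\theta_y^i(y,v)\,dy_i$ and $\theta_v\equiv 0$. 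The Pontryagin--Hamilton action
\[
\tilde S[y,v]=\int\Bigl(\sum_i\theta_y^i(y,v)\,\dot y_i-\bar H(y,v)\Bigr)\,dt,
\]
treated as a first-order variational problem in the independent functions $y$ and $v$, has Euler--Lagrange equations equivalent to $\dot z=\bar J^{-1}\nabla\bar H$. The EL equation in $v$ is purely algebraic, $\partial_{v_i}\bar H=(\partial_{v_i}\theta_y^j)\,\dot y_j$; under the standard nondegeneracy assumption it can be solved for $v=v(y,\dot y)$, and back-substitution gives $\bar L(y,\dot y):=\theta_y(y,v(y,\dot y))\cdot\dot y-\bar H(y,v(y,\dot y))$, whose Euler--Lagrange equation is equivalent to the original Hamiltonian system by a routine reduction argument.

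The main obstacle is making the sufficiency direction rigorous in a clean form. The Poincar\'e step is unproblematic locally (and formal in the power-series setting of the rest of the paper), but the reduction from the Pontryagin action to a first-order $\bar L(y,\dot y)$ hinges on a nondegeneracy condition involving $\bar H$ and the chosen primitive $\theta$ jointly; the proposition should therefore be read as allowing an implicit regularity hypothesis on $\bar H$, mirroring the regularity of $\bar L$ used for the Legendre transform in the necessity direction.
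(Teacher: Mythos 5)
Your proof is correct and follows essentially the same route as the paper's: sufficiency rests on producing a primitive of the symplectic form that annihilates the vertical distribution $\mathrm{span}\{\p/\p\dot y_i\}$ and inserting it into the phase-space (Poincar\'e--Cartan) action, while necessity rests on observing that $\d\bigl(\sum_j p_j\,\d y_j\bigr)$ with $p=\p\bar L/\p\dot y$ contains no $\d\dot y\wedge\d\dot y$ component. The only differences are that you establish the adapted primitive by an explicit fiberwise Poincar\'e lemma where the paper cites \cite[Cor.15.7]{Libermann1987}, and you spell out the algebraic elimination of the fibre variable $v$ (together with the joint nondegeneracy condition on $\bar H$ and $\theta$) that the paper's appeal to Hamilton's principle leaves implicit --- both steps are sound and make the argument more self-contained.
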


\begin{proof}
Denote the domain of the variable $y$ by $Y$, the 1-jet space over $Y$ by $\mathrm{Jet}^1(Y)$, and the symplectic form represented by the matrix $\bar{J}$ by $\bar{\omega}$. Consider a Hamiltonian $\bar{H}\colon \mathrm{Jet}^1(Y) \to \R$.
If the distribution $\mathcal{D}$ spanned by $\frac{\p}{\p \dot{y}_1},\ldots,\frac{\p}{\p \dot{y}_n}$ is Lagrangian w.r.t.\ $\bar{\omega}$, then there exists a primitive $\bar{\lambda}$ of $\bar{\omega}$ with kernel $\mathcal{D}$ \cite[Cor.15.7]{Libermann1987}. The 1-form $\bar{\lambda}$ is of the form
\[
\bar{\lambda}=\sum_{j=1}^n \ell(y,\dot{y})\, \d y_j.
\]
By Hamilton's principle, a curve $\gamma\colon [t_{\mathrm{start}},t_{\mathrm{end}}] \to Y$ is a Hamiltonian motion if the action functional
\[
\bar{S}(\gamma) = \int_\gamma (\bar{\lambda} - \bar{H} \d t)
\]
is stationary w.r.t.\ variations of $\gamma$ through smooth curves fixing the endpoints. Thanks to the special structure of $\bar{\lambda}$ (absence of $\d \dot{y}_j$-terms), the pullback of $\bar{\lambda} - \bar{H} \d t$ along a curve $\gamma$ has the form
\[
\bar{L}(y(t),\dot{y}(t))\d t
\]
with $y(t)$ describing $\gamma(t)$ in the coordinate $y$. Therefore, in coordinates, the variational principle has the form \eqref{eq:VarPrincSbar}.

On the other hand, a variational principle of the form \eqref{eq:VarPrincSbar} with regular Lagrangian $\bar{L}$ (i.e.\ invertible $\left(\frac{\p^2\bar{L}}{\p \dot{y}_i \p \dot{y}_j }\right)_{i,j}$) can be formulated as a Hamiltonian system with Hamiltonian
\[
\bar{H}(y,\dot y) = \langle \dot{y},p(y,\dot y) \rangle - L(y,\dot y), \quad \text{and with}\quad p(y,\dot{y})= \frac{\p\bar{L}}{\p \dot{y}}(y,\dot y).
\]
The symplectic structure is given as
\[
\bar{\omega} = \d \bar{\lambda}, \quad \text{with} \quad \bar{\lambda} =  \sum_{j=1}^n  p_j(y,\dot y) \d y_j.
\]
As the distribution $\mathcal{D}$ spanned by $\frac{\p}{\p \dot{y}_1},\ldots,\frac{\p}{\p \dot{y}_n}$ is in the kernel of a primitive of $\bar{\omega}$, the distribution is Lagrangian.
\end{proof}

\begin{remark}
The strength of \cref{prop:HamS} lies in the assertion that $\bar{L}$ is a first-order Lagrangian in the original variable $y$, i.e.\ it depends on $(y,\dot{y})$ only.
If $\bar{J}$ is {\em not} of the required form, then, by Darboux's theorem, we can perform a change of variables on $\mathrm{Jet}^1(Y)$ such that $\bar{\omega}$ is the standard symplectic form $\sum \d \bar{p}_i \wedge \d \bar{q}_i$ and $\bar{L}$ is a 1st-order Lagrangian in $q$, i.e.\ depends on $(q,\dot{q})$ but not on higher derivatives in $q$.
However, as $q$ and $p$ each depend on $(y,\dot{y})$, the variables have lost their dynamical meaning. This is because the required change of variables on $\mathrm{Jet}^1(Y)$ is not fibred, i.e.\ the jet-space structure is not preserved.
Expressed in the original variable $y$, the Lagrangian $\bar{L}$ then depends on $(y,\dot{y},\ddot{y})$, i.e.\ describes a higher order variational structure.
\end{remark}

\begin{remark}
The computational example presented in \cref{sec:CompEx} provides an example for which the modified symplectic structure is {\em not} of the form that is required in \cref{prop:HamS}, unless $A_1$ is of the form $A_1=\alpha I$, i.e.\ the method coincides with a multistep method with scalar coefficients. 
Indeed, if the considered method is a classical multistep method, i.e.\ all coefficients are scalar, then $L_\mod(y,\dot{y})$ exists by \cref{thm:Smod}.
\end{remark}

We now proceed to the proof of \cref{thm:Smod}. We exploit that linear multistep methods can be interpreted as 1-step methods on the original phase space \cite[\S XV.2]{GeomIntegration}. Here and below we refer to the theory of linear multistep methods for 2nd order ODEs.

\begin{proof}[Proof of \cref{thm:Smod}]
As proved in \cite[\S 5]{Chartier2006}, for the underlying 1-step method $\phi$ of a symmetric linear multistep method there exists a local diffeomorphism $\psi$ such that $\tilde \phi =\psi \circ \phi \circ \psi^{-1}$ is symplectic with respect to the original symplectic structure $\omega=\sum_j \d p_j \wedge \d q_j$. The conjugacy $\psi$ is given as a $P$-series applied to the original Hamiltonian vector field $X_0$ given by the right hand side of
\begin{align*}
\dot{q}&=p\\
\dot{p}&=\nabla U(q).
\end{align*}
The $P$-series $\psi$ is a formal power series that is in general not convergent. Conjugacy, pull-back and push-forward operations are to be interpreted in a formal sense.
The map $\phi$ is symplectic with respect to the modified symplectic structure $\omega_\mod=\psi^\ast \omega$ and is the time-$h$-flow of a vector field $X$, for which the flow equations correspond to a first order formulation of the modified equation \eqref{eq:modODE}. The $\omega$-symplectic map $\tilde{\phi}$ is the time-$h$-flow of the $\psi$-related vector field $\tilde{X} = \psi_\ast X \circ \psi^{-1}$. By standard results on backward error analysis for symplectic integrators \cite[\S IX]{GeomIntegration}, $\tilde X$ is a Hamiltonian vector field w.r.t.\ the standard symplectic structure $\omega$ for a Hamiltonian $H$ up to any order in the step-size $h$. Pulling back the Hamiltonian structure $(\omega,H)$ using $\psi$, we obtain a modified Hamiltonian system $(\omega_\mod,H_\mod) = (\psi^\ast \omega, H \circ \psi)$ such that Hamilton's equations are equivalent to the modified equation \eqref{eq:modODE}.

Since $\psi$ is a $P$-series in $X_0$, the distribution $\mathcal D$ spanned by the vertical vector fields $\frac{\p}{\p p_1},\ldots,\frac{\p}{\p p_n}$ is Lagrangian w.r.t.\ $\omega_\mod=\psi^\ast \omega$.
By \cref{prop:HamS}, for any order $N$ in the step-size $h$ there exists a first order Lagrangian $L_\mod(y,\dot{y})$ in the original variable such that the variational principle
\[
\delta \left(\int L_\mod(y(t),\dot{y}(t)) \d t\right) =0
\]
recovers the modified equation up to higher order terms in $h$.
\end{proof}

\begin{remark}\label{rem:combinatorialStructure}
The proof of \cref{thm:Smod} also shows that $L_\mod$ in \cref{thm:Smod} has the structure of an $S$-series applied to a $P$-series (see \cite{Chartier2006}). The modified Lagrangian $L_\mod$ can, thus, be computed with an ansatz as well.
The modified data $H_\mod$ and $J_\mod$ can then be computed from $L_\mod$ by a Legendre transformation.
\end{remark}

\section{Future work}\label{sec:future}
Motivated by optimal truncation results for modified equations, it would be interesting to analyse the convergence properties of modified symplectic structures, modified Hamiltonians, and modified Lagrangians. Moreover, in view of \cref{rem:combinatorialStructure}, a systematic description of the combinatorial structure of the modified quantities appears feasible.

\bibliography{resourcesnoUrl}
\bibliographystyle{plainurl}

\appendix

\section{Illustration of blended backward error analysis on a classical example}\label{sec:IlluBEA}

For comparison of blended backward error analysis with classical backward error analysis \cite[\S IX]{GeomIntegration} as well as with Vermeeren's approach \cite{Vermeeren2017}, let us illustrate our method of backward error analysis on a traditional example. 
The 1-dimensional mechanical ODE $\ddot y +\nabla W(y)=0$ arises as the Euler-Lagrange equation to the Lagrangian $L (y,\dot{y}) = \frac 12 {\dot{y}}^2 - W(y)$. The St\"ormer--Verlet scheme corresponds to the discrete Euler--Lagrange equations with discrete Lagrangian \[L_\Delta(y_i,y_{i+1}) = \frac 12 \frac{(y_i-y_{i+1})^2}{h^2} - \frac {W(y_i)+W(y_{i+1})}2.\]
In the above expression $(y_i)_{i \in \Z}$ is a discrete variable which approximates a continuous variable on $\R$ at all points of a uniform grid with spacing $h$. In the following $y$ denotes a continuous variable $y \colon \R \to \R$. 


Following the backward error analysis approach of the paper, we compute a series expansion $\mathcal L^{[4]}_\Delta$ of $L_\Delta(y(t),y(t + h))$ around $h=0$, form Ostrogradsky's Hamiltonian description of high-order Lagrangians, and substitute higher order derivatives of $y$ in the Hamiltonian using the Euler--Lagrange equations to $\mathcal L^{[4]}_\Delta$. We obtain the modified Hamiltonian
\begin{align*}
H^{[4]}_\mod(y,\dot{y})
&=W+\frac{\dot{y}^2}{2}
+\frac{1}{24} h^2 \big(-2 W'' \dot{y}^2-\left(W'\right)^2\big)\\
&+\frac{1}{720} h^4 \big(-2 \left(W''\right)^2 \dot{y}^2+3 W^{(4)} \dot{y}^4-6 W^{(3)} W' \dot{y}^2-3 \left(W'\right)^2 W''\big),
\end{align*}
where $W$ and its derivatives $W',W'',W^{(3)},W^{(4)}$ are evaluated at $y$.
A potential drawback compared to classical backward error analysis is that $H^{[4]}_\mod$ does not correspond to the original symplectic structure $\d y \wedge \d \dot{y}$ which we would obtain via Lagrange transformation for the exact Lagrangian $L$. Instead, we obtain a perturbed symplectic structure $\omega^{[4]}_\mod$ which in the frame $\frac{\p }{\p y},\frac{\p }{\p \dot y}$ is represented by the matrix
\[
J^{[4]}_\mod = \begin{pmatrix}
0&-\omega_{21}\\
\omega_{21}&0
\end{pmatrix}
\]
with
\[
\omega_{21} = 1-\frac{1}{6}h^2 W'' + \frac{1}{180} h^4 \left(3 W^{(4)} (\dot{y})^2-3 W^{(3)} W'-(W'')^2\right).
\]
However, the flexibility in the symplectic structure in our approach allows the computation of modified Hamiltonian structures in cases where the flow is only {\em conjugate} symplectic as in the multipoint Lagrangians considered in this paper. In this example, however, a change of variables is not necessary since the distribution $\mathcal{D}$ spanned by $\frac{\p}{\p \dot{y}}$ is Lagrangian for $\omega^{[4]}_\mod$. Therefore, we find a primitive $\lambda^{[4]}$ of $\omega^{[4]}_\mod$ with kernel $\mathcal{D}$. The primitive is given as
\[
\lambda^{[4]} =  -\left(\int \omega_{21} \d {\dot{y}}\right)\d y.
\]
A modified Lagrangian $L^{[4]}_\mod$ can be obtained from
\[
L^{[4]}_\mod \d t = \lambda^{[4]} - H^{[4]}_\mod\d t
\]
as
\begin{align*}
L^{[4]}_\mod(y,\dot{y})
&=-\left(\int \omega_{21} \d {\dot{y}}\right)\dot{y} - H^{[4]}_\mod \\
&=\frac{1}{2} \big(\dot{y}^2-2 W\big)
+\frac{1}{24} h^2 \left(\left(W'\right)^2-2 W'' \dot{y}^2\right)\\
&+\frac{1}{720} h^4 \big(-2 \left(W''\right)^2 \dot{y}^2+W^{(4)} \dot{y}^4-6 W^{(3)} W' \dot{y}^2+3 \left(W'\right)^2 W''\big).
\end{align*}


\end{document}